\newtheorem{theorem}{Theorem}[section]
\newtheorem{definition}[theorem]{Definition}
\newtheorem{lemma}[theorem]{Lemma}
\newtheorem{remark}[theorem]{Remark}
\newenvironment{proof}[1][Proof]{\noindent\textbf{#1.} }{\ \rule{0.5em}{0.5em}}
\begin{document}

\begin{center}
\ \textbf{\large Existence and Behavior Results For a Nonlocal
Nonlinear Parabolic Equation With Variable Exponent}

\bigskip

U\u{g}ur Sert$^{\dag}$, Eylem \"{O}zt\"{u}rk \footnotetext{%
$^\dag$ Corresponding Author U. Sert: Faculty of Science,
Department of Mathematics, Hacettepe University, 06800, Beytepe,
Ankara, Turkey. e-mail: usert@hacettepe.edu.tr
\par
E. \"{O}zt\"{u}rk: Faculty of Science, Department of Mathematics, Hacettepe
University, 06800, Beytepe, Ankara, Turkey. e-mail: eozturk@hacettepe.edu.tr%
}
\end{center}

\noindent \textbf{Abstract}. In this paper, we study the
solvability of a Cauchy- Dirichlet problem for nonlinear parabolic
equation with non standard growths and nonlocal terms. We show the
existence of weak solutions of the considered problem under more
general conditions. In addition, we obtain some results on the
behavior of the solution when the problem is homogeneous.
\bigskip

\noindent \textbf{Keywords}: Degenerate parabolic equations, non
standard nonlinearity, non-local source, solvability theorem,
embedding theorems.

\bigskip

\noindent\textbf{AMS Subject Classification:} 35D30, 35K20, 35K55,
35K65.\ \ \ \ \ \ \ \

\section{Introduction}

\noindent The present paper deals with the existence and behavior
of the solution (uniqueness in some sense) for nonlinear
degenerate parabolic Dirichlet-type boundary value problem whose
model example is the following:
\begin{equation*}
\left\{
\begin{array}{l}
\frac{\partial {u}}{\partial {t}}-\sum\limits_{i=1}^{n}D_{i}\left(
\left\vert u\right\vert ^{p_{0}-2}D_{i}u\right) +a\left(
x,t,u\right) +g\left( x,t\right) \left\Vert u\right\Vert
_{L^{p}\left( \Omega \right)
}^{s}\left( t\right) =h\left( x,t\right), \\
u\left( x,0\right) =0=u_{0}\left(x\right),\hspace{0.3cm}u\mid
_{\Gamma _{T}}=0
\end{array}%
\right. \label{MP} \tag{1.1}
\end{equation*}
where $\left( x,t\right) \in Q_{T}:=\Omega \times \left(
0,T\right),$ $T>0,$
$\Gamma_{T}:=\partial\Omega\times\left[0,T\right],$ $\Omega
\subset \mathbb{R}^{n}\left( n\geq 3\right) $ is a bounded domain
which has sufficiently smooth boundary (at least Lipschitz
boundary), $D_{i}\equiv\partial/\partial x_{i}$ and $p_{0}\geq 2,
p, s\geq 1$
and $a:\Omega \times \left( 0,T\right) \times \mathbb{R}\rightarrow \mathbb{R},$ $%
a\left( x,t,\tau \right) $ is a function with variable
nonlinearity in $\tau ,$ (for example, $a\left( x,t,\tau \right)=
a_{0}\left( x,t\right) \left\vert \tau \right\vert ^{\alpha \left(
x,t\right) -1}+a_{1}\left( x,t\right) $) and $g$ is a real valued
measurable function different from zero almost everywhere on
$Q_{T}.$
\par Recently nonlinear parabolic equations with nonlocal terms have been considerably
studied; e.g
see(\cite{AFA,AND,BEB1,CHi1,CHi2,CHi3,KDEN,WDEN,DUZ}). Here,
``Nonlinear nonlocal term" denotes a function dependence in space
domain $\Omega$. There are many physical processes that could be
expressed by nonlocal mathematical models and investigated by many
authors. For example, Galaktionov and Levine \cite{GAL} presented
a general view to critical Fujita exponents in nonlinear parabolic
problems with nonlocal nonlinearities, Pao \cite{PAO2} considered
a nonlocal model obtained from combustion theory. Degenerate
parabolic equations with a nonlocal term which appears in a
population dynamics model that interacts with each other by
chemical means, were studied in \cite{AND,WDEN,FUR}.
\par Equation in \eqref{MP} is nonlinear with respect to the solution and for the case $p_{0}=2$, the
equation appears in an ignition model for a compressible reactive
gas which is a nonlocal reaction-diffusion equation,
\cite{AND,BEB2}. In this case the existence, uniqueness and
blow-up of nonnegative solutions to the problem of form \eqref{MP}
have been studied in \cite{PAO1,ROU,SAP,SOU,WANG}. Models similar
to \eqref{MP} may also arise in theory of biological species to
describe the density of some biological species while nonlocal
term and absorption terms cooperate and communicate during the
diffusion.
\par The general form of the boundary-value problems including the equations of type \eqref{MP} which
is known as the equation of Newtonian filtration is
\begin{equation*}
u_{t}=\Delta \varphi\left(u\right)+f,
\end{equation*}
\eqref{MP} is a parabolic equation with implicit degeneracy which
is similar to the equation of Newtonian polytropic filtration
\cite{DUB,KAL,LAD,LIO} i.e.
\begin{equation*}
 u_{t}=\Delta\left(\left\vert u\right\vert^{m-1}u\right)+f,
\end{equation*} where $m>1$. This equation is
parabolic for $u$ different from $0$ and degenerates for $u = 0$.
Equation above with $m > 1$ describes the non-stationary flow of a
compressible Newtonian fluid in a porous medium under polytropic
conditions.
\par Over the past decade, there has been an increasing interest in the study of degenerate parabolic equations that involves
variable exponents \cite{ALK,ANT1,ANT2}. In this paper, we
investigate the parabolic equation whenever the additional term
$f$ has variable exponent nonlinearity together with nonlocal
term. If we reorganize the main part of the equation by taking the
derivative in the sum outside, we obtain
\begin{equation*}
 u_{t}=\Delta\left(\left\vert u\right\vert^{p_{0}-2}u\right)+F\left(x,t,u,\left\Vert u\right\Vert _{L^{p}\left(
\Omega \right)},h\right),
\end{equation*}
to the best of our knowledge, by now there has not been any
studies on the existence of solutions for the parabolic equations
of the type \eqref{MP} with $F$ is a function of nonlinear
non-local term $\left\Vert u\right\Vert _{L^{p}\left( \Omega
\right) }^{s}\left( t\right)$ besides a function of $\left\vert
u\right\vert$ with variable exponent nonlinearity. We stress that
since the nonlinearity of nonlocal term $g\left( x,t\right)
\left\Vert u\right\Vert _{L^{p}\left( \Omega \right) }^{s}\left(
t\right)$ is independent from the local nonlinearity, this causes
some difficulties in studying on uniqueness and on behavior of the
solution of the considered problem \eqref{MP}.
\par We use the general solvability theorem \cite{S4}, (Theorem \ref{gex}) to prove
the existence of weak solution of problem \eqref{MP}. We study the
posed problem in the space, that generated by this problem and
prove the existence of sufficiently smooth, in some sense,
solution of the problem under more general (weak) conditions.
Investigating most of boundary value problem on its own space
leads to obtain better results. Henceforth here considered problem
is investigated on its own space. Unlike linear boundary value
problems, the sets generated by nonlinear problems are subsets of
linear spaces, but not possessing the linear structure (see
\cite{BAN,SER,S1,S2,S3,S4} and references therein).
\par This paper is organized as follows: In the next section, we recall
some useful results on the generalized Orlicz-Lebesgue spaces and
results on nonlinear spaces (pn-spaces). In Section 3, we present
the assumptions, definition of the weak solution and prove the
existence of weak solution for problem \eqref{MP}. In Section 4,
we present a result on behavior of the solutions of \eqref{MP}
when the problem is homogeneous.

\section{Preliminaries}

\subsection{Generalized Lebesgue spaces}

In this subsection, some available facts from the theory of the
generalized Lebesgue spaces also called Orlicz-Lebesgue spaces
will be introduced. We present these facts without proofs which
can be found in \cite{ADA,DIE,KOV,RAD}.

Let $\Omega $ be a Lebesgue measurable subset of $%
\mathbb{R}
^{n}$ such that $\left\vert \Omega \right\vert >0$. (Throughout the paper,
we denote by $\left\vert \Omega \right\vert $ the Lebesgue measure of $%
\Omega $). Let $p\left( x,t\right) \geq 1$ be a measurable bounded function
defined on the cylinder $Q_{T}=\Omega \times \left( 0,T\right) $ i.e.%
\begin{equation*}  \label{2.1}
1\leq p^{-}\equiv \underset{Q_{T}}{ess}\inf \left\vert p\left( x,t\right)
\right\vert \leq \underset{Q_{T}}{ess}\sup \left\vert p\left( x,t\right)
\right\vert \equiv p^{+}<\infty .  \tag{2.1}
\end{equation*}
Then on the set of all functions on $Q_{T}$ define the functional $\sigma
_{p}$ and $\left\Vert .\right\Vert _{p}$ by%
\begin{equation*}
\sigma _{p}\left( u\right) \equiv \int\limits_{Q_{T}}\left\vert
u\right\vert ^{p\left( x,t\right) }dxdt.
\end{equation*}%
and%
\begin{equation*}
\left\Vert u\right\Vert _{L^{p\left( x,t\right) }\left( Q_{T}\right) }\equiv
\inf \left\{ \lambda >0|\text{ }\sigma _{p}\left( \frac{u}{\lambda }\right)
\leq 1\right\} .
\end{equation*}%
The Generalized Lebesgue space is defined as follows:%
\begin{equation*}
L^{p\left( x,t\right) }\left( Q_{T}\right) :=\left\{ u:u\text{ }\text{is a
measurable real-valued function in }Q_{T},\text{ }\sigma _{p}\left( u\right)
<\infty \right\} .
\end{equation*}%
The space $L^{p\left( x,t\right) }\left( Q_{T}\right) $ becomes a Banach
space under the norm $\left\Vert .\right\Vert _{L^{p\left( x,t\right)
}\left( \Omega \right) }$which is so-called Luxemburg norm.\medskip

We present the following results for these spaces
\cite{KOV,RAD,RAO}:

\begin{lemma}
\label{lm2.1} Let $0<\left\vert \Omega \right\vert <\infty ,$ and $p_{1},$ $%
p_{2}$ holds \eqref{2.1} then%
\begin{equation*}
L^{p_{1}\left( x,t\right) }\left( Q_{T}\right) \subset
L^{p_{2}\left( x,t\right) }\left( Q_{T}\right) \iff \text{
}p_{2}\left( x,t\right) \leq p_{1}\left( x,t\right) \text{ for a.e
}\left(x,t\right)\in Q_{T}
\end{equation*}
\end{lemma}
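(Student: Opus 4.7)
The plan is to prove both implications separately, with the nontrivial direction being the necessity.

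For the sufficiency direction ($\Leftarrow$), I would take $u\in L^{p_{1}(x,t)}(Q_{T})$ and split $Q_{T}$ into $A=\{(x,t)\in Q_{T}:|u(x,t)|\leq 1\}$ and $B=Q_{T}\setminus A$. On $A$, since $p_{2}\geq 1$, one has $|u|^{p_{2}(x,t)}\leq 1$, which is integrable because $|Q_{T}|<\infty$. On $B$, the pointwise inequality $p_{2}(x,t)\leq p_{1}(x,t)$ combined with $|u|>1$ gives $|u|^{p_{2}(x,t)}\leq |u|^{p_{1}(x,t)}$, which is integrable by hypothesis. Summing the two contributions shows $\sigma_{p_{2}}(u)<\infty$, hence $u\in L^{p_{2}(x,t)}(Q_{T})$.

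For the necessity direction ($\Rightarrow$), I would argue by contradiction. Assuming the inclusion holds as sets, the identity map $i:L^{p_{1}(x,t)}(Q_{T})\to L^{p_{2}(x,t)}(Q_{T})$ is a well-defined linear map between Banach spaces. Its graph is closed: if $u_{n}\to u$ in $L^{p_{1}(x,t)}$ and $u_{n}\to v$ in $L^{p_{2}(x,t)}$, both modes of convergence imply (along a common subsequence) a.e.\ convergence, so $u=v$. By the Closed Graph Theorem, $i$ is continuous, yielding a constant $C>0$ with $\|u\|_{L^{p_{2}(x,t)}(Q_{T})}\leq C\|u\|_{L^{p_{1}(x,t)}(Q_{T})}$ for all $u\in L^{p_{1}(x,t)}(Q_{T})$.

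Now suppose the set $E=\{(x,t)\in Q_{T}:p_{2}(x,t)>p_{1}(x,t)\}$ has positive measure. Decomposing $E=\bigcup_{k\in\mathbb{N}}E_{k}$ with $E_{k}=\{(x,t)\in E:p_{1}(x,t)\leq k,\ p_{2}(x,t)-p_{1}(x,t)\geq 1/k\}$, some $E_{k_{0}}$ has positive measure. Since $|E_{k_{0}}|<\infty$, I can extract a descending chain of measurable subsets $F_{n}\subset E_{k_{0}}$ with $|F_{n}|\to 0$. Setting $u_{n}=\lambda_{n}\chi_{F_{n}}$ and choosing the scalars $\lambda_{n}\to\infty$ so that $\sigma_{p_{1}}(u_{n}/\|u_{n}\|_{L^{p_{1}(x,t)}})=1$ remains normalized with bounded $L^{p_{1}(x,t)}$-norm, the gap $p_{2}-p_{1}\geq 1/k_{0}$ on $F_{n}$ forces $\sigma_{p_{2}}(u_{n})$ to grow faster than any multiple of $\sigma_{p_{1}}(u_{n})$, contradicting the continuity estimate. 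This forces $|E|=0$, i.e.\ $p_{2}\leq p_{1}$ a.e.

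The main obstacle I anticipate is the explicit construction and scalar calibration of the sequence $\{u_{n}\}$ in the necessity direction: one must carefully balance the measure $|F_{n}|$ against $\lambda_{n}$ so that the $L^{p_{1}(x,t)}$-norm stays bounded while the $L^{p_{2}(x,t)}$-norm diverges, using the interplay between the Luxemburg norm and the modular on sets where the two exponents differ by at least $1/k_{0}$.
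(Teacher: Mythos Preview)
The paper does not give its own proof of this lemma: the preliminaries section explicitly states that the results on generalized Lebesgue spaces are presented ``without proofs which can be found in \cite{ADA,DIE,KOV,RAD}'', and Lemma~2.1 is simply quoted with a citation to \cite{KOV,RAD,RAO}. So there is no argument in the paper against which to compare your proposal.

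That said, your outline is correct and follows the standard route found in the cited literature (e.g.\ Kov\'a\v{c}ik--R\'akosn\'ik). The sufficiency direction via the $\{|u|\le 1\}$/$\{|u|>1\}$ split is exactly the usual argument. For necessity, invoking the Closed Graph Theorem to upgrade the set inclusion to a norm inequality, and then building test functions $\lambda_n\chi_{F_n}$ on a subset where $p_2-p_1\ge 1/k_0$, is also the classical approach. The only point to be careful with is the calibration you flag yourself: one clean way is to fix $|F_n|$ small and take $\lambda_n=|F_n|^{-1/k_0}$ (or similar), then use the modular--norm comparison $\min\{\|u\|^{p^-},\|u\|^{p^+}\}\le\sigma_p(u)\le\max\{\|u\|^{p^-},\|u\|^{p^+}\}$ together with the uniform gap $p_2-p_1\ge 1/k_0$ and the bound $p_1\le k_0$ on $F_n$ to force $\|u_n\|_{L^{p_2}}/\|u_n\|_{L^{p_1}}\to\infty$. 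This is routine once set up, so your anticipated obstacle is not a genuine gap.
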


\begin{lemma}
The dual space of $L^{p\left( x,t\right) }\left( Q_{T}\right) $ is $%
L^{p^{\ast }\left( x,t\right) }\left( Q_{T}\right) $ if and only if $p\in
L^{\infty }\left( Q_{T}\right) $. The space $L^{p\left( x,t\right) }\left(
Q_{T}\right) $ is reflexive if and only if%
\begin{equation*}
1<p^{-}\leq p^{+}<\infty
\end{equation*}%
here $p^{\ast }\left( x,t\right) \equiv \frac{p\left( x,t\right) }{p\left(
x,t\right) -1}.$
\end{lemma}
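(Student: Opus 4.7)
The plan is to establish the two statements in sequence, with the duality claim supplying the reflexivity claim almost for free. Throughout, the key analytic tool is the variable-exponent H\"{o}lder inequality
\[
\int_{Q_T}\lvert fg\rvert\,dxdt \leq C\,\lVert f\rVert_{L^{p(x,t)}(Q_T)}\,\lVert g\rVert_{L^{p^{\ast}(x,t)}(Q_T)},
\]
which follows from Young's inequality $ab\le a^{p(x,t)}/p(x,t)+b^{p^{\ast}(x,t)}/p^{\ast}(x,t)$ applied pointwise and integrated; this must be verified first, since it shows that every $g\in L^{p^{\ast}(x,t)}(Q_T)$ induces a bounded functional $\Phi_g(f):=\int_{Q_T}fg\,dxdt$ on $L^{p(x,t)}(Q_T)$. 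Hence the natural embedding $L^{p^{\ast}(x,t)}(Q_T)\hookrightarrow (L^{p(x,t)}(Q_T))^{\ast}$ is always available.

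For the converse, I would use the Radon--Nikodym route. Given $\Phi\in (L^{p(x,t)}(Q_T))^{\ast}$, define the set function $\nu(E):=\Phi(\chi_E)$ on measurable $E\subset Q_T$. Since $\lVert\chi_E\rVert_{L^{p(x,t)}}\to 0$ as $\lvert E\rvert\to 0$ (here the assumption $p\in L^{\infty}(Q_T)$ is used to obtain the quantitative bound $\lVert\chi_E\rVert_{L^{p(x,t)}}\le\lvert E\rvert^{1/p^{+}}+\lvert E\rvert^{1/p^{-}}$), $\nu$ is a countably additive signed measure that is absolutely continuous with respect to Lebesgue measure. The Radon--Nikodym theorem produces $g$ with $\Phi(\chi_E)=\int_E g$, and by linearity $\Phi(s)=\int_{Q_T}sg\,dxdt$ for every simple $s$. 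The boundedness of $p$ also guarantees that simple functions are dense in $L^{p(x,t)}(Q_T)$ (by truncation and the dominated convergence form of the modular--norm relationship), so this representation extends to all $f\in L^{p(x,t)}(Q_T)$. Finally, a standard truncation argument, testing $\Phi$ against $f_k:=\lvert g\rvert^{p^{\ast}-2}g\,\chi_{\{\lvert g\rvert\le k\}}$ and using the definition of the Luxemburg norm, yields $\sigma_{p^{\ast}}(g)<\infty$ with $\lVert g\rVert_{L^{p^{\ast}(x,t)}}\le C\lVert\Phi\rVert$, giving the reverse embedding. This step is the main obstacle: one must be careful that the test functions stay in $L^{p(x,t)}(Q_T)$ and that passing to the limit in $k$ respects the modular estimate, which is precisely where $p^{+}<\infty$ is indispensable. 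Conversely, if $p\notin L^{\infty}(Q_T)$, a counterexample can be produced by concentrating mass on sets where $p$ blows up, violating the uniform control needed for the representation.

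For reflexivity, I would apply the duality result twice. If $1<p^{-}\le p^{+}<\infty$, then $p^{\ast}$ also satisfies $1<(p^{\ast})^{-}\le (p^{\ast})^{+}<\infty$, so the first part gives
\[
\bigl(L^{p(x,t)}(Q_T)\bigr)^{\ast\ast}=\bigl(L^{p^{\ast}(x,t)}(Q_T)\bigr)^{\ast}=L^{(p^{\ast})^{\ast}(x,t)}(Q_T)=L^{p(x,t)}(Q_T),
\]
with the canonical map coinciding with the double-dual embedding, proving reflexivity. For the converse, if either $p^{-}=1$ or $p^{+}=\infty$, one of the two spaces in the chain fails to have a representable dual (as in the classical Lebesgue setting where $L^{1}$ and $L^{\infty}$ are not reflexive), so reflexivity fails; I would record this by exhibiting a non-attained norm on the unit ball. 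I expect the density/representation argument in Step 2 to require the most care, whereas the reflexivity part is essentially a formal consequence once the duality is in hand.
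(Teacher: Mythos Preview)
The paper does not prove this lemma: it is stated in the preliminaries subsection with the explicit remark that these facts are presented ``without proofs which can be found in'' the cited references (Adams, Diening et al., Kov\'a\v{c}ik--R\'akosn\'{\i}k, R\u{a}dulescu--Repov\v{s}). So there is no argument in the paper to compare against; the lemma is quoted as background.

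Your outline follows the standard Radon--Nikodym representation proof found in those references, and the ``if'' directions are correctly sketched: H\"older gives the embedding $L^{p^{\ast}}\hookrightarrow (L^{p})^{\ast}$, the set function $\nu(E)=\Phi(\chi_E)$ is absolutely continuous when $p^{+}<\infty$, density of simple functions extends the representation, and a truncation test recovers $g\in L^{p^{\ast}}$. Two points deserve more care if you intend to write this out in full. First, the ``only if'' for duality is too vague as stated: when $p\notin L^{\infty}$ the real obstruction is that bounded functions need not be dense in $L^{p(\cdot)}$ and the space can acquire singular functionals, not merely that your norm estimate on $\chi_E$ fails. Second, for reflexivity you correctly note that one must check the composite isomorphism $(L^{p})^{\ast\ast}\cong L^{p}$ coincides with the canonical embedding $J$; this is routine but should be verified explicitly rather than asserted. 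With those caveats, your approach is the textbook one and is sound.
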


For $u\in L^{p\left( x,t\right) }\left( Q_{T} \right) $ and $v\in
L^{q\left( x,t\right) }\left( Q_{T} \right) $ where $p,$ $q$
satisfies \eqref{2.1} and $\frac{1}{p\left( x,t\right) }+\frac{1}{q\left( x,t\right) }%
=1$ the following inequalities are hold:

\begin{equation*}
\int\limits_{Q_{T}}\left\vert uv\right\vert dxdt\leq 2\left\Vert
u\right\Vert _{L^{p\left( x,t\right) }\left( Q_{T}\right) }\left\Vert
v\right\Vert _{L^{q\left( x,t\right) }\left( Q_{T}\right) }\text{ }
\end{equation*}%
and for all $u\in L^{p\left( x,t\right) }\left( \Omega \right) ,$
\begin{equation*}
\min \{\left\Vert u\right\Vert _{L^{p\left( x,t\right) }\left( Q_{T}\right)
}^{p^{-}},\left\Vert u\right\Vert _{L^{p\left( x,t\right) }\left(
Q_{T}\right) }^{p^{+}}\}\leq \sigma _{p}\left( u\right) \leq \max
\{\left\Vert u\right\Vert _{L^{p\left( x,t\right) }\left( Q_{T}\right)
}^{p^{-}},\left\Vert u\right\Vert _{L^{p\left( x,t\right) }\left(
Q_{T}\right) }^{p^{+}}\}.
\end{equation*}

\subsection{On pn-spaces}

In this subsection, we introduce some function classes which are
complete metric spaces and directly connected to the considered
problem. Also we give some embedding results for these spaces
\cite{S1,S2,S3,S4} (see also references cited therein).

\begin{definition}
Let $\alpha \geq 0,$ $\beta \geq 1$, $\varrho =\left( \varrho
_{1,..,}\varrho _{n}\right) $ is multi-index, $m\in
\mathbb{Z}
^{+},$ $\Omega \subset
\mathbb{R}
^{n}\left( n\geq 1\right) $ is bounded domain with sufficiently smooth
boundary.%
\begin{equation*}
S_{m,\alpha ,\beta }\left( \Omega \right) \equiv \left\{ u\in L^{1}\left(
\Omega \right) \mid \left[ u\right] _{S_{m,\alpha ,\beta }\left( \Omega
\right) }^{\alpha +\beta }\equiv \sum_{0\leq \left\vert \varrho \right\vert
\leq m}\left( \int\limits_{\Omega }\left\vert u\right\vert ^{\alpha
}\left\vert D^{\varrho }u\right\vert ^{\beta }dx\right) <\infty \right\}
\end{equation*}%
in particularly,
\begin{equation*}
\mathring{S}_{1,\alpha ,\beta }\left( \Omega \right) \equiv \left\{ u\in
L^{1}\left( \Omega \right) \mid \left[ u\right] _{\mathring{S}_{1,\alpha
,\beta }\left( \Omega \right) }^{\alpha +\beta }\equiv \sum_{i=1}^{n}\left(
\int\limits_{\Omega }\left\vert u\right\vert ^{\alpha }\left\vert
D_{i}u\right\vert ^{\beta }dx\right) <\infty \right\} \cap \left\{ u\mid
_{\partial \Omega }\equiv 0\right\}
\end{equation*}%
and for $p\geq1$,
\begin{equation*}
L^{p}\left( 0,T;\mathring{S}_{1,\alpha ,\beta }\left( \Omega \right) \right)
\equiv \left\{ u\in L^{1}\left( Q_{T}\right) \mid \left[ u\right]
_{L^{p}\left( 0,T;\mathring{S}_{1,\alpha ,\beta }\left( \Omega \right)
\right) }^{p}\equiv \int\limits_{0}^{T}\left[ u\right] _{\mathring{S}%
_{1,\alpha ,\beta }\left( \Omega \right) }^{p}dt<\infty \right\}.
\end{equation*}%
These spaces are called pn-spaces.\footnote{$S_{1,\alpha ,\beta }\left(
\Omega \right) $ is a complete metric space with the following metric: $%
\forall u,v\in S_{1,\alpha ,\beta }\left( \Omega \right) $%
\par
\begin{equation*}
d_{S_{1,\alpha ,\beta }}\left( u,v\right) =\left\Vert \left\vert
u\right\vert ^{\frac{\alpha }{\beta }}u-\left\vert v\right\vert ^{\frac{%
\alpha }{\beta }}v\right\Vert _{W^{1,\beta }\left( \Omega \right) }
\end{equation*}%
}
\end{definition}

\begin{theorem}
\label{homeo} Let $\alpha \geq 0,$ $\beta \geq 1$ then $\varphi :%
\mathbb{R}
\longrightarrow
\mathbb{R}
$, $\varphi \left( t\right) \equiv \left\vert t\right\vert ^{\frac{\alpha }{%
\beta }}t$ is a homeomorphism between $S_{1,\alpha ,\beta }\left( \Omega
\right) $ and $W^{1,\beta }\left( \Omega \right) $.
\end{theorem}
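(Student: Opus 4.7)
The plan is to show that the pointwise map $u \mapsto \varphi(u) = |u|^{\alpha/\beta}u$ is an isometric bijection from $\bigl(S_{1,\alpha,\beta}(\Omega), d_{S_{1,\alpha,\beta}}\bigr)$ onto $\bigl(W^{1,\beta}(\Omega),\|\cdot\|_{W^{1,\beta}}\bigr)$. Since the footnote defines $d_{S_{1,\alpha,\beta}}(u,v) = \|\varphi(u)-\varphi(v)\|_{W^{1,\beta}(\Omega)}$, continuity of $\varphi$ and of $\varphi^{-1}$ is automatic once bijectivity is established, and so the whole argument reduces to proving that $\varphi$ maps $S_{1,\alpha,\beta}(\Omega)$ \emph{onto} $W^{1,\beta}(\Omega)$.

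First I would record the elementary properties of $\varphi$ on $\mathbb{R}$: it is strictly increasing, continuous, of class $C^1$ on $\mathbb{R}$ (with $\varphi'(t) = \frac{\alpha+\beta}{\beta}|t|^{\alpha/\beta}$, so $\varphi'(0)=0$ when $\alpha>0$), and its inverse is $\varphi^{-1}(s)=|s|^{-\alpha/(\alpha+\beta)}s$ (Hölder but in general not Lipschitz at $0$). To show $\varphi(S_{1,\alpha,\beta}) \subset W^{1,\beta}$, take $u \in S_{1,\alpha,\beta}(\Omega)$ and set $w=\varphi(u)$. From $|w|^\beta=|u|^{\alpha+\beta}$ the $\varrho=0$ term in the definition of $S_{1,\alpha,\beta}$ gives $w\in L^\beta(\Omega)$. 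Applying the Sobolev chain rule to the $C^1$ function $\varphi$ yields $D_i w=\frac{\alpha+\beta}{\beta}|u|^{\alpha/\beta}D_i u$ a.e., so $\int_\Omega|D_i w|^\beta dx=\bigl(\frac{\alpha+\beta}{\beta}\bigr)^\beta\int_\Omega|u|^\alpha|D_i u|^\beta dx<\infty$, proving $w\in W^{1,\beta}(\Omega)$.

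For surjectivity, given $w\in W^{1,\beta}(\Omega)$, define $u:=\varphi^{-1}(w)$ pointwise; then $u$ is measurable and $\int_\Omega|u|^{\alpha+\beta}dx=\int_\Omega|w|^\beta dx<\infty$. The delicate point is to define $D_i u$ weakly and to verify $|u|^\alpha|D_i u|^\beta\in L^1(\Omega)$, since $\varphi^{-1}$ is only Hölder at $0$. I would handle this by regularizing: let $\psi_\varepsilon\in C^1(\mathbb{R})$ be globally Lipschitz and agree with $\varphi^{-1}$ for $|s|\ge\varepsilon$. The classical chain rule applied to $\psi_\varepsilon\circ w\in W^{1,\beta}(\Omega)$ produces legitimate weak derivatives, and by Stampacchia's lemma $D_i w=0$ a.e.\ on $\{w=0\}$. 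Letting $\varepsilon\to 0$ and invoking dominated convergence with envelope $C|D_i w|^\beta$ yields that $u$ has weak derivatives given by $D_i u=\frac{\beta}{\alpha+\beta}|w|^{-\alpha/(\alpha+\beta)}D_i w$ on $\{w\ne 0\}$ and $D_i u=0$ a.e.\ on $\{w=0\}$. Consequently $|u|^\alpha|D_i u|^\beta=\bigl(\frac{\beta}{\alpha+\beta}\bigr)^\beta|D_i w|^\beta\in L^1(\Omega)$, hence $u\in S_{1,\alpha,\beta}(\Omega)$ and $\varphi(u)=w$.

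Combining the forward inclusion, the surjectivity, and the pointwise injectivity of $\varphi$ on $\mathbb{R}$, together with the isometry identity from the footnote, proves that $\varphi$ is a homeomorphism. The main obstacle is the chain-rule passage for $\varphi^{-1}$ on the vanishing set of $w$: here the truncation/approximation scheme together with Stampacchia's lemma does the essential work, while the forward direction is a routine chain-rule computation.
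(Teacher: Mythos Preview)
The paper does not actually prove this theorem; it is quoted from the references \cite{S1,S2,S3,S4} in the preliminaries section without argument, so there is no ``paper's own proof'' to compare against. Your plan is therefore being judged on its own merits.

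Your overall strategy is the natural one, and the observation that the footnote metric makes $\varphi$ an isometry --- so that bijectivity is the only real issue --- is exactly right. The forward inclusion is fine, modulo the small point that the usual Sobolev chain rule is stated for globally Lipschitz outer functions while your $\varphi$ is only locally Lipschitz; a truncation of $u$ at level $N$ followed by monotone convergence fixes this easily.

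There is, however, a genuine soft spot in your surjectivity step. You write that dominated convergence ``with envelope $C|D_i w|^\beta$'' shows that $u=\varphi^{-1}(w)$ has weak derivatives given by the expected formula. That envelope controls the \emph{weighted} quantity $|u_\varepsilon|^\alpha|D_i u_\varepsilon|^\beta$, and indeed gives $\int_\Omega |u|^\alpha|D_i u|^\beta\,dx=(\tfrac{\beta}{\alpha+\beta})^\beta\int_\Omega|D_i w|^\beta\,dx$. It does \emph{not} show that $D_i u$ itself lies in $L^1_{\mathrm{loc}}(\Omega)$: for that you would need an $L^1$ bound on $|w|^{-\alpha/(\alpha+\beta)}|D_i w|$, which is not available for general $w\in W^{1,\beta}(\Omega)$. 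So if one reads the symbol $D_i u$ in the definition of $S_{1,\alpha,\beta}(\Omega)$ as an ordinary weak derivative, your argument has a gap.

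In Soltanov's original papers this is resolved at the level of definitions: membership in $S_{1,\alpha,\beta}(\Omega)$ is \emph{characterized} by $\varphi(u)\in W^{1,\beta}(\Omega)$ (equivalently, the expression $|u|^{\alpha/\beta}D_i u$ is read as $\tfrac{\beta}{\alpha+\beta}D_i(\varphi(u))$), which is precisely why the metric in the footnote is well defined. Under that reading the theorem is essentially tautological, and your computation of the weighted integral is all that is needed. You should make this interpretive point explicit rather than claim that $u$ possesses classical weak derivatives.
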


\begin{theorem}
\label{embed} The following embeddings are satisfied:

\begin{itemize}
\item[(i)] Let $\alpha ,$ $\alpha _{1}\geq 0$ and $\beta _{1}\geq 1$, $\beta
\geq \beta _{1},$ $\frac{\alpha _{1}}{\beta _{1}}\geq \frac{\alpha }{\beta }%
, $ $\alpha _{1}+\beta _{1}\leq \alpha +\beta $ then we have%
\begin{equation*}
\mathring{S}_{1,\alpha ,\beta }\left( \Omega \right) \subseteq \mathring{S}%
_{1,\alpha _{1},\beta _{1}}\left( \Omega \right).
\end{equation*}

\item[(ii)] Let $\alpha \geq 0,$ $\beta \geq 1,$ $n>\beta $ and $\frac{%
n\left( \alpha +\beta \right) }{n-\beta }\geq r$ then there is a continuous
embedding%
\begin{equation*}
\mathring{S}_{1,\alpha ,\beta }\left( \Omega \right) \subset
L^{r}\left( \Omega \right).
\end{equation*}%
Furthermore for $\frac{n\left( \alpha +\beta \right) }{n-\beta }>r$ the
embedding is compact.

\item[(iii)] If $\alpha \geq 0,$ $\beta \geq 1$ and $p\geq \alpha +\beta \ $%
then%
\begin{equation*}
W_{0}^{1,p}\left( \Omega \right) \subset \mathring{S}_{1,\alpha ,\beta
}(\Omega )
\end{equation*}%
is hold.
\end{itemize}
\end{theorem}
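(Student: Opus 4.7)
The unifying idea is to lean on the homeomorphism $\varphi(t)=|t|^{\alpha/\beta}t$ supplied by Theorem \ref{homeo}, which identifies $u\in\mathring{S}_{1,\alpha,\beta}(\Omega)$ with $v=|u|^{\alpha/\beta}u\in W_{0}^{1,\beta}(\Omega)$, and to combine it with H\"{o}lder's inequality and the classical Sobolev embedding. I would establish (ii) first, since it furnishes the $L^{r}$ integrability reservoir that (i) then exploits, and finish with (iii).

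\emph{Plan for (ii).} From Theorem \ref{homeo} we have $v\in W_{0}^{1,\beta}(\Omega)$. Since $\beta<n$, the classical Sobolev embedding gives $v\in L^{n\beta/(n-\beta)}(\Omega)$; unwinding $|v|=|u|^{(\alpha+\beta)/\beta}$ shows exactly $u\in L^{n(\alpha+\beta)/(n-\beta)}(\Omega)$, and the claim for any $r\le n(\alpha+\beta)/(n-\beta)$ follows from boundedness of $\Omega$. For compactness in the strict case, I would take a bounded sequence $\{u_{k}\}\subset\mathring{S}_{1,\alpha,\beta}(\Omega)$, apply Rellich--Kondrachov to the corresponding bounded sequence $\{v_{k}\}\subset W_{0}^{1,\beta}(\Omega)$ to extract a strongly convergent subsequence in $L^{q}(\Omega)$ for every $q<n\beta/(n-\beta)$, deduce pointwise a.e.\ convergence of $v_{k}$ and hence of $u_{k}=\varphi^{-1}(v_{k})$, and finally upgrade this via Vitali's theorem, using the uniform bound in the critical Lebesgue space, to norm convergence of $u_{k}$ in $L^{r}(\Omega)$ for every subcritical $r$.

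\emph{Plan for (i).} For $\beta>\beta_{1}$, decompose the integrand as
\begin{equation*}
|u|^{\alpha_{1}}|D_{i}u|^{\beta_{1}}=\bigl(|u|^{\alpha}|D_{i}u|^{\beta}\bigr)^{\beta_{1}/\beta}\,|u|^{\alpha_{1}-\alpha\beta_{1}/\beta};
\end{equation*}
the hypothesis $\alpha_{1}/\beta_{1}\ge\alpha/\beta$ ensures the second exponent is non-negative. H\"{o}lder with conjugates $\beta/\beta_{1}$ and $\beta/(\beta-\beta_{1})$ reduces the claim to $u\in L^{r}(\Omega)$ with $r=(\alpha_{1}\beta-\alpha\beta_{1})/(\beta-\beta_{1})$; a short algebraic check driven by $\alpha_{1}+\beta_{1}\le\alpha+\beta$ gives $r\le\alpha+\beta$, and (ii) together with boundedness of $\Omega$ supplies the requisite integrability. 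The degenerate case $\beta=\beta_{1}$ collapses the hypotheses to $\alpha_{1}=\alpha$, rendering the inclusion trivial, while the boundary trace $u\mid_{\partial\Omega}=0$ is preserved under $\varphi$.

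\emph{Plan for (iii).} Since $p\ge\alpha+\beta$, the exponent $\alpha p/(p-\beta)$ coming from H\"{o}lder with conjugates $p/\beta$ and $p/(p-\beta)$ satisfies $\alpha p/(p-\beta)\le p$, so Sobolev/Poincar\'e on $W_{0}^{1,p}(\Omega)$ together with boundedness of $\Omega$ provides enough integrability to bound
\begin{equation*}
\int_{\Omega}|u|^{\alpha}|D_{i}u|^{\beta}\,dx\le\Bigl(\int_{\Omega}|D_{i}u|^{p}\,dx\Bigr)^{\beta/p}\Bigl(\int_{\Omega}|u|^{\alpha p/(p-\beta)}\,dx\Bigr)^{(p-\beta)/p},
\end{equation*}
and the vanishing trace is inherited from $W_{0}^{1,p}(\Omega)$. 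The main obstacle will be the compactness assertion in (ii): because $\varphi^{-1}$ is nonlinear, one cannot simply transport strong convergence in $W_{0}^{1,\beta}$ to strong convergence in $\mathring{S}_{1,\alpha,\beta}$ by linear machinery, so a Vitali-style uniform-integrability argument is essential.
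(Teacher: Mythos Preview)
The paper does not prove Theorem~\ref{embed}; it is recorded in Section~2.2 as a preliminary result imported from the references \cite{S1,S2,S3,S4}, so there is no in-paper argument to compare against.

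Your plan is the standard and correct one: part (ii) is exactly the Sobolev/Rellich--Kondrachov theorem transported through the homeomorphism of Theorem~\ref{homeo}, and parts (i) and (iii) are H\"{o}lder splittings that reduce to Lebesgue integrability already secured for $u$. One small point worth tightening: in your argument for (i) you appeal to (ii) to obtain $u\in L^{r}(\Omega)$ with $r\le\alpha+\beta$, but (ii) carries the hypothesis $n>\beta$, which (i) does not assume. You can bypass this cleanly by observing directly that $v=|u|^{\alpha/\beta}u\in W_{0}^{1,\beta}(\Omega)\subset L^{\beta}(\Omega)$ (Poincar\'e on a bounded domain) already gives $u\in L^{\alpha+\beta}(\Omega)$ with no dimensional restriction; the case $n\le\beta$ is of course even better. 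Otherwise the algebra you record (in particular $r=(\alpha_{1}\beta-\alpha\beta_{1})/(\beta-\beta_{1})\le\alpha+\beta$ from $\alpha_{1}+\beta_{1}\le\alpha+\beta$, and $\alpha p/(p-\beta)\le p$ from $p\ge\alpha+\beta$) checks out, and the Vitali argument for compactness is the right way to handle the nonlinearity of $\varphi^{-1}$.
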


\bigskip Now we present general solvability theorem \cite{S4}, the proof of it is based on Galerkin approximation (see also for
similar theorems \cite{S1,S3}), that will be used to prove the
existence of a weak solution of problem \eqref{MP}.

\begin{theorem}
\label{gex} Let $X$ and $Y$ be Banach spaces with dual spaces $X^{\ast }$
and $Y^{\ast }$ respectively, $Y$ be a reflexive Banach space, $\mathit{M}%
_{0}\subseteq X$ be a weakly complete \textquotedblleft
reflexive\textquotedblright\ pn-space, $X_{0}\subseteq \mathit{M}_{0}\cap Y$
be a separable vector topological space. Let the following conditions be
fulfilled:

\begin{itemize}
\item[(i)] $f:S_{0}\longrightarrow L^{q}\left( 0,T;Y\right) $ is a weakly
compact (weakly continuous) mapping, where%
\begin{equation*}
S_{0}:=L^{p}\left( 0,T;\mathit{M}_{0}\right) \cap W^{1,q}\left( 0,T;Y\right)
\cap \left\{ x\left( t\right) :x\left( 0\right) =0\right\}
\end{equation*}%
$1<\max \left\{ q,q^{\prime }\right\} \leq p<\infty ,$ $q^{\prime }=\frac{q}{%
q-1}$;

\item[(ii)] there is a linear continuous operator $A:W^{s,m}\left(
0,T;X_{0}\right) \longrightarrow W^{s,m}\left( 0,T;Y^{\ast }\right) ,$ $%
s\geq 0,$ $m\geq 1$ such that $A$ commutes with $\frac{\partial }{\partial t}
$ and the conjugate operator $A^{\ast }$ has ker$\left( A^{\ast }\right) =0$;

\item[(iii)] operators $f$ and $A$ generate, in generalized sense, a
coercive pair on space $L^{p}\left( 0,T;X_{0}\right) ,$ i.e. there exist a
number $r>0$ and a function $\Psi :%
\mathbb{R}
_{+}^{1}\longrightarrow
\mathbb{R}
_{+}^{1}$ such that $\Psi \left( \tau \right) /\tau \nearrow \infty $ as $%
\tau \nearrow \infty $ and for any $x\in L^{p}\left( 0,T;X_{0}\right) $
under $\left[ x\right] _{L^{p}\left( \mathit{M}_{0}\right) }\geq r$
following inequality holds:
\begin{equation*}
\int\limits_{0}^{T}\left\langle f\left( t,x\left( t\right) \right) ,Ax\left(
t\right) \right\rangle dt\geq \Psi \left( \left[ x\right] _{L^{p}\left(
\mathit{M}_{0}\right) }\right) ;
\end{equation*}

\item[(iv)] there exists some constants $C_{0}>0,$ $C_{1},C_{2}\geq 0$ and $%
\nu >1$ such that the inequalities%
\begin{align*}
& \int\limits_{0}^{T}\left\langle \xi \left( t\right) ,A\xi \left( t\right)
\right\rangle dt\geq C_{0}\left\Vert \xi \right\Vert _{L^{q}\left(
0,T;Y\right) }^{\nu }-C_{2}, \\
& \int\limits_{0}^{t}\left\langle \frac{\partial x}{\partial \tau },Ax\left(
\tau \right) \right\rangle d\tau \geq C_{1}\left\Vert x\right\Vert _{Y}^{\nu
}\left( t\right) -C_{2},\text{ \ \ a.e. }t\in \left[ 0,T\right]
\end{align*}%
hold for any $x\in W^{1,p}\left( 0,T;X_{0}\right) $ and $\xi \in L^{p}\left(
0,T;X_{0}\right) .$
\end{itemize}

Assume that that conditions (i)-(iv) are fulfilled. Then the Cauchy problem%
\begin{equation*}
\frac{dx}{d\tau }+f\left( t,x\left( t\right) \right) =y\left( t\right) ,%
\text{ \ \ \ }y\in L^{q}\left( 0,T;Y\right) ;\text{ \ }x\left( 0\right) =0
\end{equation*}%
is solvable in $S_{0}$ in the following sense%
\begin{equation*}
\int\limits_{0}^{T}\left\langle \frac{dx}{d\tau }+f\left( t,x\left( t\right)
\right) ,y^{\ast }\left( t\right) \right\rangle
dt=\int\limits_{0}^{T}\left\langle y\left( t\right) ,y^{\ast }\left(
t\right) \right\rangle ,\text{ \ \ }\forall y^{\ast }\in L^{q^{\prime
}}\left( 0,T;Y^{\ast }\right) ,
\end{equation*}%
for any $y\in L^{q}\left( 0,T;Y\right) $ satisfying the inequality%
\begin{equation*}
\sup \left\{ \frac{1}{\left[ x\right] _{L^{p}\left( 0,T;\mathit{M}%
_{0}\right) }}\int\limits_{0}^{T}\left\langle y\left( t\right) ,Ax\left(
t\right) \right\rangle dt:x\in L^{p}\left( 0,T;X_{0}\right) \right\} <\infty
.
\end{equation*}
\end{theorem}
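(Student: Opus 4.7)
The plan is to prove Theorem \ref{gex} by a Galerkin approximation scheme, using conditions (ii)--(iv) to produce a priori bounds and condition (i) to pass to the limit in the nonlinear term. Since $X_0$ is a separable vector topological space, I would first fix a countable set $\{e_j\}_{j\geq 1}\subset X_0$ whose linear hull is dense in $X_0$ (and therefore dense in $\mathit{M}_0$ along the sequence of approximations we are going to use), and set $X_n:=\mathrm{span}\{e_1,\dots,e_n\}$. On each $X_n$ I would seek an approximant $x_n(t)=\sum_{j=1}^n c_j^n(t)\,e_j$ satisfying, for every $j=1,\dots,n$ and a.e.\ $t\in[0,T]$,
\begin{equation*}
\left\langle \tfrac{d x_n}{d\tau}(t)+f(t,x_n(t)),\,A e_j\right\rangle =\left\langle y(t),\,A e_j\right\rangle,\qquad x_n(0)=0.
\end{equation*}
Because $A$ is linear continuous and $Y$ is reflexive, this is a Carath\'eodory ODE system in the coefficients $c^n(t)$; local solvability follows from the weak continuity of $f$ assumed in (i), and the global extension onto $[0,T]$ will follow from the a priori estimates produced below.

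Next, I would test the Galerkin identity against $A x_n(t)$ itself (which lies in the span of $Ae_1,\dots,Ae_n$, so this is legitimate) and integrate in time. The second inequality of (iv) controls $\int_0^t\langle \partial_\tau x_n,\,Ax_n\rangle\,d\tau$ from below by $C_1\|x_n(t)\|_Y^\nu-C_2$, while the coercivity assumption (iii) gives
\begin{equation*}
\int_0^T\!\langle f(t,x_n(t)),\,Ax_n(t)\rangle\,dt\;\geq\;\Psi\!\left([x_n]_{L^p(\mathit{M}_0)}\right)
\end{equation*}
once $[x_n]_{L^p(\mathit{M}_0)}\geq r$. Combining these with the right-hand side $\int_0^T\langle y,Ax_n\rangle\,dt$, which is finite by the hypothesis on $y$ in the conclusion, and exploiting the superlinear growth $\Psi(\tau)/\tau\nearrow\infty$, I obtain uniform bounds on $x_n$ in $L^p(0,T;\mathit{M}_0)$ and in $L^\infty(0,T;Y)$. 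Using the first inequality of (iv) applied to $\xi=f(\cdot,x_n(\cdot))$-style test functions, together with the equation, I would deduce a uniform bound on $\partial_\tau x_n$ in $L^q(0,T;Y^*)$, so that $\{x_n\}$ is bounded in the whole space $S_0$.

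With these bounds at hand I would extract a subsequence with $x_n\rightharpoonup x$ weakly in $L^p(0,T;\mathit{M}_0)\cap W^{1,q}(0,T;Y)$ (weak completeness and reflexivity of $\mathit{M}_0$ and $Y$ are precisely what is needed). Condition (i), the weak compactness/weak continuity of $f:S_0\to L^q(0,T;Y)$, delivers $f(\cdot,x_n(\cdot))\rightharpoonup f(\cdot,x(\cdot))$ in $L^q(0,T;Y)$, which is the decisive step that permits the limit in the nonlinear term of the Galerkin identity. Passing to the limit $n\to\infty$ in the weak formulation shows that for every $j$ and every $\varphi\in C_c^\infty(0,T)$,
\begin{equation*}
\int_0^T \left\langle \tfrac{dx}{d\tau}+f(t,x(t))-y(t),\,\varphi(t)\,A e_j\right\rangle dt=0.
\end{equation*}
Finally, the density of $\{Ae_j\}$ in the range of $A$ together with $\ker(A^*)=0$ from condition (ii) implies that $\{Ae_j\}$ spans a set whose linear hull is weak-$*$ dense in $Y^*$; hence the identity extends to every $y^*\in L^{q'}(0,T;Y^*)$, which is exactly the required weak-solution form. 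The initial condition $x(0)=0$ is preserved because $x_n(0)=0$ and $x_n\rightharpoonup x$ in $C([0,T];Y_w)$ by the Aubin--Lions-type argument combined with the $W^{1,q}(0,T;Y)$ bound. The single serious obstacle is the nonlinear passage to the limit: everything else is a priori bookkeeping, but identifying $\lim f(t,x_n(t))$ with $f(t,x(t))$ rests entirely on the weak-continuity clause in (i), and it is for this reason that (i) is formulated so strongly in the hypothesis.
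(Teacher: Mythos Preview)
The paper does not actually prove Theorem~\ref{gex}: it is quoted verbatim from Soltanov \cite{S4} (see the sentence immediately preceding the theorem, ``Now we present general solvability theorem \cite{S4}, the proof of it is based on Galerkin approximation\ldots''), and no proof is supplied in the present paper. So there is nothing in the paper to compare your argument against beyond the single remark that the proof in \cite{S4} proceeds by Galerkin approximation, which is exactly the scheme you outline.

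Your sketch is broadly consistent with that remark and with the standard architecture of such results. Two small points are worth tightening. First, the time-derivative bound: from the equation $\tfrac{dx_n}{d\tau}=y-f(t,x_n)$ with $y\in L^q(0,T;Y)$ and $f:S_0\to L^q(0,T;Y)$, the natural uniform bound on $\partial_\tau x_n$ is in $L^q(0,T;Y)$, not in $L^q(0,T;Y^{\ast})$ as you wrote; this matters because $S_0$ is defined with $W^{1,q}(0,T;Y)$. Second, your use of the first inequality in (iv) is a bit vague (``applied to $\xi=f(\cdot,x_n(\cdot))$-style test functions''); in practice one does not need that inequality to bound $\partial_\tau x_n$ once $f(\cdot,x_n)$ is already known to be bounded in $L^q(0,T;Y)$ by the mapping property in (i) together with the $S_0$-bound you obtained. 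Otherwise the logical flow---local ODE solvability on $X_n$, a priori bounds from (iii) and (iv), weak compactness from reflexivity of $Y$ and weak completeness of $\mathit{M}_0$, identification of the nonlinear limit via (i), and the density argument from $\ker(A^{\ast})=0$ in (ii)---is the expected one.
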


\section{Statement of The Problem and The Main Result}

Let $\Omega \subset \mathbb{R}^{n}\left( n\geq 3\right) $ be a
bounded domain with sufficiently smooth boundary $\partial \Omega
.$ We study the problem
\begin{equation*}
\begin{cases}
\frac{\partial {u}}{\partial {t}}-\sum\limits_{i=1}^{n}D_{i}\left(
\left\vert u\right\vert ^{p_{0}-2}D_{i}u\right) +a\left( x,t,u\right)
+g\left( x,t\right) \left\Vert u\right\Vert _{L^{p}\left( \Omega \right)
}^{s}\left( t\right) =h\left( x,t\right) , & \left( x,t\right) \in Q_{T} \\
u\left( x,0\right) =0=u_{0}\left(x\right),\hspace{0.3cm}u\mid _{\Gamma _{T}}=0 &
\end{cases}%
\end{equation*}%
\eqref{MP} under the following conditions: \medskip \par
$p_{0}\geq 2,$ $p,s\geq 1,$ $g:Q_{T}\rightarrow \mathbb{R}$ is a
measurable function different from zero almost everywhere on $Q_{T}$ and $%
a:\Omega \times \left( 0,T\right) \times \mathbb{R}\rightarrow \mathbb{R},$ $%
a\left( x,t,\tau \right) $ is a Carath\'{e}dory  function with
variable nonlinearity in $\tau $ (see inequality \eqref{esit1}).
\medskip
\\Let the function $a\left( x,t,\tau \right) $ in problem
\eqref{MP} hold the following conditions: \medskip

\textbf{(U1)} \textit{There exists a measurable function} $\alpha
:\Omega \times \left( 0,T\right) \longrightarrow \mathbb{R}$,
$1<\alpha ^{-}\leq
\alpha \left( x,t\right) \leq \alpha ^{+}<\infty$ \textit{such that} $%
a\left( x,t,\tau \right) $\textit{\ satisfies the inequalities}%
\begin{equation}  \label{esit1}
\left\vert a\left( x,t,\tau \right) \right\vert \leq a_{0}\left( x,t\right)
\left\vert \tau \right\vert ^{\alpha \left( x,t\right) -1}+a_{1}\left(
x,t\right)  \tag{3.1}
\end{equation}
and
\begin{equation}  \label{esit2}
a\left( x,t,\tau \right) \tau \geq a_{2}\left( x,t\right) \left\vert \tau
\right\vert ^{\alpha \left( x,t\right) }-a_{3}\left( x,t\right),  \tag{3.2}
\end{equation}
a.e. $\left( x,t,\tau \right) \in Q_{T}\times \mathbb{R}.$

\textit{\ Here} $a_{i},$ $i=0,1,2,3$\textit{\ are nonnegative, measurable
functions defined on }$Q_{T}$ and $a_{2}\left( x,t\right) \geq A_{0}>0$%
\textit{\ a.e. }$\left( x,t\right) \in Q_{T}$. \bigskip \newline
We investigate problem \eqref{MP} for the functions $h\in
L^{q_{0}}\left( 0,T;W^{-1,q_{0}}\left( \Omega \right) \right)
+L^{\alpha ^{\ast }\left( x,t\right) }\left( Q_{T}\right) $ where
$\alpha ^{\ast }$ is conjugate of $\alpha $ i.e. $\alpha ^{\ast
}\left( x,t\right) :=\frac{\alpha
\left( x,t\right) }{\alpha \left( x,t\right) -1}$ and the dual space $%
W^{-1,q_{0}}\left( \Omega \right) :=\left( W_{0}^{1,p_{0}}\left(
\Omega \right) \right) ^{\ast },$ $q_{0}:=\frac{p_{0}}{p_{0}-1}$.

Let us denote $S_{0}$ by
\begin{equation*}
S_{0}:=L^{p_{0}}\left( 0,T;\mathring{S}_{1,\left( p_{0}-2\right)
q_{0},q_{0}}\left( \Omega \right) \right) \cap L^{\alpha \left( x,t\right)
}\left( Q_{T}\right) \cap W^{1,q_{0}}\left( 0,T;W^{-1,q_{0}}\left( \Omega
\right) \right) \cap \{u:u\left( x,0\right) =0\}.
\end{equation*}
We understand the solution of the considered problem in the
following sense:
\begin{definition}
\label{weakdef} A function $u\in S_{0}$, is called the generalized
solution
(weak solution) of problem \eqref{MP} if it satisfies the equality%
\begin{align*}
& \int\limits_{0}^{T}\int\limits_{\Omega }\frac{\partial {u}}{\partial {t}}%
wdxdt+\sum_{i=1}^{n}\int\limits_{0}^{T}\int\limits_{\Omega }\left(
\left\vert u\right\vert ^{p_{0}-2}D_{i}u\right) D_{i}wdxdt \\
& +\int\limits_{0}^{T}\int\limits_{\Omega }a\left( x,t,u\right)
wdxdt+\int\limits_{0}^{T}\int\limits_{\Omega }g\left( x,t\right) \left\Vert
u\right\Vert _{L^{p}\left( \Omega \right)
}^{s}wdxdt=\int\limits_{0}^{T}\int\limits_{\Omega }hwdxdt
\end{align*}%
for all $w\in L^{p_{0}}\left( 0,T;W_{0}^{1,p_{0}}\left( \Omega \right)
\right) \cap L^{\alpha \left( x,t\right) }\left( Q_{T}\right) \cap
W^{1,q_{0}}\left( 0,T;W^{-1,q_{0}}\left( \Omega \right) \right) $.
\end{definition}

We now ready to proceed the main theorem of this section but first
denote the followings. For sufficiently small $\eta \in \left(
0,1\right)$
\begin{equation*}
Q_{1,T}:=\left\{ \left( x,t\right) \in Q_{T}|\text{ }\alpha \left(
x,t\right) \in \lbrack 1,p_{0}-\eta )\right\} ,
\end{equation*}%
\begin{equation*}
Q_{2,T}:=\left\{ \left( x,t\right) \in Q_{T}|\text{ }\alpha \left(
x,t\right) \in \lbrack p_{0}-\eta ,\alpha ^{+}]\right\}
\end{equation*}

and
\begin{equation*}
\beta \left( x,t\right) :=\left\{
\begin{array}{l}
\frac{p_{0}\alpha ^{\ast }\left( x,t\right) }{p_{0}-\alpha \left( x,t\right)
}\text{ if\ }\left( x,t\right) \in Q_{1,T} \\
\infty \text{ \ \ \ \ \ \ \ \ if\ }\left( x,t\right) \in Q_{2,T}%
\end{array}%
\right.
\end{equation*}

Also, $\tilde{p_{0}}:=\frac{np_{0}}{n-q_{0}}$ which is critical
exponent in Theorem \ref{embed} and its conjugate is
$\tilde{p_{0}}^{\ast }=\frac{\tilde{p_{0}}}{\tilde{p_{0}}-1}.$

\begin{theorem}\textbf{(Existence Theorem)}
\label{var} Let \textbf{(U1)} is hold; $1\leq s<p_{0}-1$ and
$p\leq p_{0}.$ If $a_{0}\in L^{\beta \left( x,t\right) }\left(
Q_{T}\right) ,$ $a_{1}\in L^{\alpha ^{\ast }\left( x,t\right)
}\left( Q_{T}\right) ,$ $a_{2}\in L^{\infty
}\left( Q_{T}\right) ,$ $a_{3}\in L^{1}\left( Q_{T}\right) $ and $g\in L^{%
\frac{p_{0}}{p_{0}-(s+1)}}\left( 0,T;L^{\tilde{p_{0}}^{\ast
}}\left( \Omega \right) \right) $ then for all $h\in
L^{q_{0}}\left( 0,T;W^{-1,q_{0}}\left( \Omega \right) \right)
+L^{\alpha ^{\ast }\left( x,t\right) }\left(
Q_{T}\right) $ problem \eqref{MP} has a generalized solution in the space $%
S_{0}$ and $\partial u/\partial t$ belongs to $L^{q_{0}}\left(
0,T;W^{-1,q_{0}}\left( \Omega \right) \right).$
\end{theorem}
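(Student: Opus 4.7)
The plan is to apply the general solvability Theorem \ref{gex} with the identifications $Y := W_0^{1,p_0}(\Omega)$, $Y^* = W^{-1,q_0}(\Omega)$, $M_0 := \mathring{S}_{1,(p_0-2)q_0,q_0}(\Omega)$, $X_0 := C_0^\infty(\Omega)$, exponents $p := p_0$ and $q := q_0$, and the nonlinear operator
\begin{equation*}
f(t,u) := -\sum_{i=1}^n D_i\bigl(|u|^{p_0-2} D_i u\bigr) + a(x,t,u) + g(x,t)\,\|u\|_{L^p(\Omega)}^s.
\end{equation*}
For the auxiliary linear operator $A$ of condition (ii) I would take the canonical continuous embedding of $X_0$ into $Y^*$ induced by a Riesz/duality isomorphism for a suitable Hilbert pivot; it commutes with $\partial/\partial t$ and its adjoint has trivial kernel by density of $X_0$ in $Y$. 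Once (i), (iii), (iv) are verified below, Theorem \ref{gex} will produce $u\in S_0$ solving the Cauchy problem in the dual sense, and testing against the dense class of Definition \ref{weakdef} recovers the weak formulation, with the regularity $\partial_t u \in L^{q_0}(0,T;W^{-1,q_0}(\Omega))$ already encoded in $S_0$.

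For condition (i), that $f\colon S_0\to L^{q_0}(0,T;W^{-1,q_0}(\Omega))$ is weakly compact, I would argue term by term. The homeomorphism $\varphi(\tau)=|\tau|^{p_0-2}\tau$ of Theorem \ref{homeo} identifies $u\in\mathring{S}_{1,(p_0-2)q_0,q_0}$ with $|u|^{p_0-2}u\in W_0^{1,q_0}$, so that $-\sum_i D_i(|u|^{p_0-2}D_iu) = -\tfrac{1}{p_0-1}\Delta(|u|^{p_0-2}u)$ maps boundedly into $L^{q_0}(0,T;W^{-1,q_0}(\Omega))$. For $a(x,t,u)$, growth (3.1) together with $a_0\in L^{\beta(x,t)}(Q_T)$ and $a_1\in L^{\alpha^*(x,t)}(Q_T)$ yields $a(\cdot,\cdot,u)\in L^{\alpha^*(x,t)}(Q_T)$: on $Q_{1,T}$ the piecewise choice of $\beta$ is precisely the conjugate exponent that lets the generalized H\"older inequality combined with Theorem \ref{embed}(ii) absorb $|u|^{\alpha-1}$, while on $Q_{2,T}$ the estimate is direct from $L^{\alpha(x,t)}$-integrability. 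For the nonlocal term I would use H\"older in $x$, the compact embedding $\mathring{S}_{1,(p_0-2)q_0,q_0}(\Omega)\hookrightarrow L^{\tilde{p_0}}(\Omega)$ of Theorem \ref{embed}(ii), H\"older in $t$, and the hypothesis $g\in L^{p_0/(p_0-s-1)}(0,T;L^{\tilde{p_0}^*}(\Omega))$ to place $g\|u\|_{L^p}^s$ in $L^{q_0}(0,T;W^{-1,q_0}(\Omega))$. Weak continuity then follows from almost-everywhere convergence, Vitali's theorem and reflexivity of the variable-exponent spaces.

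For the coercivity (iii), pairing $f(t,u)$ with $u$ the principal part delivers $\sum_i\int_{Q_T}|u|^{p_0-2}|D_iu|^2\,dx\,dt$, and an application of H\"older in the form
\begin{equation*}
\sum_i\int_\Omega|u|^{(p_0-2)q_0}|D_iu|^{q_0}\,dx \;\le\; \Bigl(\sum_i\int_\Omega|u|^{p_0-2}|D_iu|^2\,dx\Bigr)^{q_0/2}\|u\|_{L^{p_0}(\Omega)}^{p_0(2-q_0)/2}
\end{equation*}
connects this energy to the pn-space seminorm $[u]_{M_0}^{p_0}$. Hypothesis (3.2) yields a nonnegative contribution $A_0\,\sigma_\alpha(u)-\|a_3\|_{L^1(Q_T)}$, and the nonlocal pairing is controlled by H\"older and Young to give at most $C\|u\|^{s+1}$ in a Lebesgue-in-$t$ Sobolev norm; since $1\le s<p_0-1$ we have $s+1<p_0$, so this contribution is strictly subordinate to the principal $p_0$-power, producing a coercivity function $\Psi(\tau)\sim\tau^{p_0}$ with $\Psi(\tau)/\tau\to\infty$. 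Condition (iv) is then routine: the chosen $A$ gives $\int_0^T\langle\xi,A\xi\rangle\,dt \ge C_0\|\xi\|_{L^\nu(0,T;Y)}^\nu$ and $\int_0^t\langle\partial_\tau x,Ax\rangle\,d\tau \ge \tfrac{C_1}{\nu}\|x(t)\|_Y^\nu$ with $\nu=2$ by the standard integration-by-parts identity, verified on $X_0$-valued smooth paths and extended by density.

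The main obstacle will be the coercivity step, and in particular the control of the nonlocal term $g(x,t)\|u\|_{L^p(\Omega)}^s$. Because this nonlinearity is decoupled from the local variable-exponent structure it cannot be absorbed by monotonicity or convexity of the principal part; the sharp condition $s<p_0-1$ together with the precise double-integrability $g\in L^{p_0/(p_0-s-1)}(0,T;L^{\tilde{p_0}^*})$ are exactly what is needed to guarantee that the resulting contribution stays strictly subordinate to the principal $p_0$-th power, preserving coercivity. A secondary technical difficulty is the variable-exponent Nemytskii analysis for $a(x,t,u)$, which is what motivates the piecewise definition of $\beta(x,t)$ and the distinct $Q_{1,T}/Q_{2,T}$ integrability demands on $a_0$ built into the hypotheses.
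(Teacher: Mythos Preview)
Your overall strategy---apply the abstract solvability Theorem \ref{gex} and verify its hypotheses term by term---is exactly the paper's strategy, and your treatment of coercivity, boundedness, and weak compactness follows the paper's Lemmas \ref{lemcoercive}--\ref{lemweak} closely. However, there is a concrete error in your functional-analytic setup that makes condition (iv) fail as written.

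You set $Y:=W_0^{1,p_0}(\Omega)$ and $Y^*=W^{-1,q_0}(\Omega)$. But in Theorem \ref{gex} the mapping $f$ must land in $L^{q}(0,T;Y)$, and the solution space $S_0$ carries the factor $W^{1,q}(0,T;Y)$. Comparing with the actual $S_0$ of the problem, which contains $W^{1,q_0}(0,T;W^{-1,q_0}(\Omega))$, one must take $Y=W^{-1,q_0}(\Omega)$ (so $Y^*=W_0^{1,p_0}(\Omega)$), the reverse of your choice. With your identification, condition (iv) would require
\[
\int_0^T\langle\xi,A\xi\rangle\,dt \;\ge\; C_0\,\|\xi\|_{L^{q_0}(0,T;W_0^{1,p_0}(\Omega))}^{\nu}-C_2,
\]
and since your $A$ is an embedding through an $L^2$ pivot, the left side is essentially $\|\xi\|_{L^2(Q_T)}^2$, which cannot dominate a $W_0^{1,p_0}$ norm. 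The paper avoids this by taking $Y=W^{-1,q_0}(\Omega)$ and, much more simply, $A:=\mathrm{Id}$ on $L^{p_0}(0,T;W_0^{1,p_0}(\Omega))\cap L^{\alpha(x,t)}(Q_T)$; then $\langle\xi,A\xi\rangle=\|\xi\|_{L^2(\Omega)}^2\ge M\|\xi\|_{W^{-1,q_0}(\Omega)}^2$ by the elementary embedding $L^2(\Omega)\hookrightarrow W^{-1,q_0}(\Omega)$, and condition (iv) follows immediately with $\nu=2$. Your proposed $A$ as a Riesz/pivot map is both vaguer and unnecessary.

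Two smaller points. First, the paper carries the factor $L^{\alpha(x,t)}(Q_T)$ explicitly in both the pn-space $M_0$ and the target sum space $L^{q_0}(0,T;W^{-1,q_0}(\Omega))+L^{\alpha^*(x,t)}(Q_T)$; this is needed to accommodate the regime $\alpha^+\ge p_0$ on $Q_{2,T}$, and it is why the test functions in Definition \ref{weakdef} lie in $L^{p_0}(0,T;W_0^{1,p_0}(\Omega))\cap L^{\alpha(x,t)}(Q_T)$ rather than just $C_0^\infty$. Second, the regularity $\partial_t u\in L^{q_0}(0,T;W^{-1,q_0}(\Omega))$ is not merely ``encoded in $S_0$''; the paper obtains it a posteriori by writing $\partial_t u=h-F(x,t,u,D_iu)$ and observing that, under the stated hypotheses, the right-hand side lies in $L^{q_0}(0,T;W^{-1,q_0}(\Omega))$.
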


\subsection{The Proof of Theorem \ref{var}}
The proof is based on the general existence theorem (Theorem
\ref{gex}). For this, we introduce the following spaces and
mappings in order to apply Theorem \ref{gex} to prove Theorem
\ref{var}.
\begin{align*}
&S_{0}:=L^{p_{0}}\left( 0,T;\mathring{S}_{1,\left( p_{0}-2\right)
q_{0},q_{0}}\left( \Omega \right) \right) \cap L^{\alpha \left(
x,t\right) }\left( Q_{T}\right) \cap W^{1,q_{0}}\left(
0,T;W^{-1,q_{0}}\left( \Omega \right) \right) \cap \{u:u\left(
x,0\right) =0\},\\
&f:S_{0}\longrightarrow L^{q_{0}}\left( 0,T;W^{-1,q_{0}}\left(
\Omega \right) \right) +L^{\alpha ^{\ast }\left( x,t\right)
}\left( Q_{T}\right),\\
&f\left( u\right) :=-\sum_{i=1}^{n}D_{i}\left( \left\vert
u\right\vert ^{p_{0}-2}D_{i}u\right) +a\left( x,t,u\right)
+g\left( x,t\right) \left\Vert u\right\Vert _{L^{p}\left( \Omega
\right) }^{s}\left( t\right),\\
&A:L^{p_{0}}\left( 0,T;W_{0}^{1,p_{0}}\left( \Omega \right)
\right) \cap L^{\alpha \left( x,t\right) }\left( Q_{T}\right)
\subset S_{0}\longrightarrow L^{p_{0}}\left(
0,T;W_{0}^{1,p_{0}}\left( \Omega \right) \right) \cap L^{\alpha
\left( x,t\right) }\left( Q_{T}\right),\\
&A:=Id.
\end{align*}
\medskip We prove some lemmas to show that all conditions of Theorem \ref{gex} are fulfilled under the conditions of Theorem \ref{var}.

\begin{lemma}
\label{lemcoercive}Under the conditions of Theorem \ref{var}, $f$
and $A$ generate a ``coercive pair'' on $L^{p_{0}}\left(0,T;W^{1,p_{0}}_{0}\left(%
\Omega\right)\right) \cap L^{\alpha \left( x,t\right) }\left(
Q_{T} \right)$.
\end{lemma}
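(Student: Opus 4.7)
The plan is to verify condition (iii) of Theorem~\ref{gex} with $A=\mathrm{Id}$: I need $r>0$ and $\Psi:\mathbb{R}_+\to\mathbb{R}_+$ with $\Psi(\tau)/\tau\nearrow\infty$ such that $\int_0^T\!\langle f(u),u\rangle\,dt\geq\Psi\bigl([u]_{L^{p_0}(0,T;\mathcal{M}_0)}\bigr)$ whenever $[u]_{L^{p_0}(\mathcal{M}_0)}\geq r$, where $\mathcal{M}_0:=\mathring{S}_{1,(p_0-2)q_0,q_0}(\Omega)$. I would test $f(u)$ against $u$ term by term, extract a superlinear coercive term of order $p_0$ from the quasilinear and Carath\'eodory parts, and absorb the nonlocal summand using $s+1<p_0$.

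Integration by parts (valid since $u(\cdot,t)\in W_0^{1,p_0}(\Omega)$ a.e.) turns the divergence part into $\sum_{i=1}^n\int_{Q_T}|u|^{p_0-2}|D_iu|^2\,dxdt$, while \eqref{esit2} with $a_2\geq A_0$ gives $\int_{Q_T}a(x,t,u)u\,dxdt\geq A_0\int_{Q_T}|u|^{\alpha(x,t)}dxdt-\|a_3\|_{L^1(Q_T)}$. The crux is then the pointwise-in-$t$ comparison
\begin{equation*}
[u(t)]_{\mathcal{M}_0}^{p_0}\ \leq\ C\sum_{i=1}^n\int_\Omega|u(t)|^{p_0-2}|D_iu(t)|^2\,dx,
\end{equation*}
which, after integration in $t$, yields $\sum_i\int_{Q_T}|u|^{p_0-2}|D_iu|^2dxdt\geq C_0[u]_{L^{p_0}(\mathcal{M}_0)}^{p_0}$.

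To prove the pointwise inequality I would use $q_0=p_0/(p_0-1)\in(1,2]$ and apply H\"older in $x$ with exponents $2/q_0$ and $2/(2-q_0)$. Writing $|u|^{(p_0-2)q_0}|D_iu|^{q_0}=\bigl(|u|^{p_0-2}|D_iu|^2\bigr)^{q_0/2}|u|^{(p_0-2)q_0/2}$ and using the identity $(p_0-2)q_0/(2-q_0)=p_0$ give
\begin{equation*}
\int_\Omega|u|^{(p_0-2)q_0}|D_iu|^{q_0}dx\leq\Bigl(\!\int_\Omega|u|^{p_0-2}|D_iu|^2dx\Bigr)^{q_0/2}\Bigl(\!\int_\Omega|u|^{p_0}dx\Bigr)^{(2-q_0)/2}.
\end{equation*}
Summing in $i$, invoking Theorem~\ref{embed}(ii) with $\tilde{p_0}=np_0/(n-q_0)\geq p_0$ to dominate $\|u\|_{L^{p_0}(\Omega)}$ by $[u]_{\mathcal{M}_0}$, and exploiting the exponent balance $p_0-p_0(2-q_0)/2=p_0q_0/2$ to isolate $[u(t)]^{p_0}$ on the left, the claim follows.

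For the nonlocal summand, $\|u\|_{L^p(\Omega)}\leq C\|u\|_{L^{\tilde{p_0}}(\Omega)}$ (from $p\leq p_0\leq\tilde{p_0}$) and H\"older in $x$ paired via $\tilde{p_0}^*,\tilde{p_0}$ give $|\!\int_\Omega gu\,dx|\leq\|g\|_{L^{\tilde{p_0}^*}}\|u\|_{L^{\tilde{p_0}}}$. H\"older in $t$ with exponents $p_0/(s+1),\,p_0/(p_0-s-1)$ (legitimate thanks to $s+1<p_0$) together with Theorem~\ref{embed}(ii) then produce
\begin{equation*}
\Bigl|\int_0^T\!\|u\|_{L^p(\Omega)}^s\!\!\int_\Omega\!gu\,dxdt\Bigr|\leq C\|g\|_{L^{p_0/(p_0-s-1)}(0,T;L^{\tilde{p_0}^*})}\,[u]_{L^{p_0}(\mathcal{M}_0)}^{s+1}.
\end{equation*}
Since $s+1<p_0$, Young's inequality absorbs this into $\tfrac{C_0}{2}[u]_{L^{p_0}(\mathcal{M}_0)}^{p_0}$ up to a constant $K$, yielding $\int_0^T\!\langle f(u),u\rangle\,dt\geq\tfrac{C_0}{2}[u]^{p_0}-K$; taking $\Psi(\tau):=\tfrac{C_0}{2}\tau^{p_0}-K$ and $r$ large enough gives $\Psi(\tau)/\tau\nearrow\infty$ (as $p_0\geq 2>1$). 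The main obstacle is precisely the H\"older manipulation in the previous paragraph, bridging the Dirichlet-type integrand $|u|^{p_0-2}|D_iu|^2$ produced by integration by parts with the specific integrand $|u|^{(p_0-2)q_0}|D_iu|^{q_0}$ defining $[\,\cdot\,]_{\mathcal{M}_0}$; once this comparison is established, the absorption of the nonlocal term is a routine consequence of the strict inequality $s+1<p_0$ and the integrability hypothesis on $g$.
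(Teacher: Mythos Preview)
Your argument is correct and follows essentially the same route as the paper's proof: test $f$ against $u$, use \eqref{esit2} for the Carath\'eodory part, control the nonlocal term via H\"older and the embedding $\mathring{S}_{1,(p_0-2)q_0,q_0}(\Omega)\subset L^{\tilde p_0}(\Omega)$, and absorb it with Young's inequality using $s+1<p_0$. The one substantive difference is that where the paper simply invokes Theorem~\ref{embed}(i) to pass from $\mathring{S}_{1,(p_0-2),2}(\Omega)$ to $\mathring{S}_{1,(p_0-2)q_0,q_0}(\Omega)$, you unpack that embedding by hand through the H\"older split $|u|^{(p_0-2)q_0}|D_iu|^{q_0}=(|u|^{p_0-2}|D_iu|^2)^{q_0/2}|u|^{(p_0-2)q_0/2}$ and the exponent identity $(p_0-2)q_0/(2-q_0)=p_0$; this is exactly the computation underlying Theorem~\ref{embed}(i) for these parameters, so it is the same step made explicit rather than a different idea.

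One small point: in your final inequality you keep only $\tfrac{C_0}{2}[u]_{L^{p_0}(0,T;\mathcal{M}_0)}^{p_0}$ and drop the nonnegative contribution $A_0\!\int_{Q_T}|u|^{\alpha(x,t)}dxdt$ that you had already derived from \eqref{esit2}. The paper retains it and records the coercivity bound as $C_2\bigl([u]_{L^{p_0}(0,T;\mathcal{M}_0)}^{p_0}+\|u\|_{L^{\alpha(x,t)}(Q_T)}^{\alpha^-}\bigr)-K$. Since the lemma asserts coercivity on $L^{p_0}(0,T;W_0^{1,p_0}(\Omega))\cap L^{\alpha(x,t)}(Q_T)$ and $\alpha^+$ is not assumed bounded by $p_0$, you should carry this term through to the end; no new work is needed, just do not discard it.
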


\begin{proof}
Since $A\equiv Id,$ being ``coercive pair'' equals to order
coercivity of $f$ on the space $L^{p_{0}}\left(
0,T;W_{0}^{1,p_{0}}\left( \Omega \right) \right) \cap L^{\alpha
\left( x,t\right) }\left( Q_{T}\right) $. For $u\in
L^{p_{0}}\left( 0,T;W_{0}^{1,p_{0}}\left( \Omega \right) \right)
\cap L^{\alpha \left( x,t\right) }\left( Q_{T}\right) $ we have,
\begin{align*}
\left\langle f\left( u\right) ,u\right\rangle
_{Q_{T}}&=\sum_{i=1}^{n}\left(
\int\limits_{0}^{T}\int\limits_{\Omega }\left\vert u\right\vert
^{p_{0}-2}\left\vert D_{i}u\right\vert
^{2}dxdt\right) \\
& +\int\limits_{Q_{T}}a\left( x,t,u\right)
udxdt+\int\limits_{0}^{T}\int\limits_{\Omega }g\left( x,t\right)
\left\Vert u\right\Vert _{L^{p}\left( \Omega \right) }^{s}udxdt.
\end{align*}
Using \eqref{esit2}, we obtain
\begin{align*}  \label{3.3}
\left\langle f\left( u\right) ,u\right\rangle _{Q_{T}}&\geq
\sum_{i=1}^{n}\left( \int\limits_{0}^{T}\int\limits_{\Omega
}\left\vert u\right\vert ^{p_{0}-2}\left\vert D_{i}u\right\vert
^{2}dxdt\right)+\int\limits_{Q_{T}}\left\vert a_{2}\left(
x,t\right)
\right\vert \left\vert u\right\vert ^{\alpha \left( x,t\right) }dxdt \\
&-\int\limits_{Q_{T}}\left\vert a_{3}\left( x,t\right) \right\vert
dxdt-\int\limits_{0}^{T}\int\limits_{\Omega }\left\vert g\left(
x,t\right) \right\vert \left\Vert u\right\Vert _{L^{p}\left(
\Omega \right) }^{s}\left\vert u\right\vert dxdt.  \tag{3.3}
\end{align*}
If we use \textbf{(U1)} to estimate the second integral in
\eqref{3.3} and H\"{o}lder inequality with the embedding
$\mathring{S}_{1,\left( p_{0}-2\right) q_{0},q_{0}}\left( \Omega
\right) \subset L^{p}\left( \Omega \right) $ in Theorem
\ref{embed} to estimate the fourth integral then we get,
\begin{align*}  \label{3.4}
\left\langle f\left( u\right) ,u\right\rangle _{Q_{T}}&\geq \left[
u\right] _{L^{p_{0}}\left( 0,T;\mathring{S}_{1,\left(
p_{0}-2\right) ,2}\left( \Omega \right) \right)
}^{p_{0}}+{A}_{0}\int\limits_{Q_{T}}\left\vert u\right\vert
^{^{\alpha \left( x,t\right) }}dxdt \\
& -C\int\limits_{0}^{T}\left[ u\right] _{\mathring{S}_{1,\left(
p_{0}-2\right) q_{0},q_{0}}\left( \Omega \right) }^{s}\left\Vert
u\right\Vert _{L^{\tilde{p_{0}}}\left( \Omega \right) }\left\Vert
g\right\Vert _{L^{\tilde{p_{0}}^{\ast }}\left( \Omega \right)
}dt\\
&-\left\Vert a_{3}\right\Vert _{L^{1}\left( Q_{T}\right) } .
\tag{3.4}
\end{align*}
By taking account the embeddings (see Theorem \ref{embed})
\begin{equation*}
\mathring{S}_{1,\left( p_{0}-2\right) ,2}\left( \Omega \right) \subset
\mathring{S}_{1,\left( p_{0}-2\right) q_{0},q_{0}}\left( \Omega \right)
\end{equation*}%
and
\begin{equation*}
\mathring{S}_{1,\left( p_{0}-2\right) q_{0},q_{0}}\left( \Omega
\right) \subset L^{\tilde{p_{0}}}\left( \Omega \right)
\end{equation*}%
into \eqref{3.4} to estimate the pseudo-norm and third integral
respectively, we obtain
\begin{align*}  \label{3.5}
\left\langle f\left( u\right) ,u\right\rangle _{Q_{T}}&\geq C_{0}\left[ u%
\right] _{L^{p_{0}}\left( 0,T;\mathring{S}_{1,\left( p_{0}-2\right)
q_{0},q_{0}}\left( \Omega \right) \right) }^{p_{0}}+{A}_{0}\int%
\limits_{Q_{T}}\left\vert u\right\vert ^{^{\alpha \left( x,t\right) }}dxdt \\
& -C_{1}\int\limits_{0}^{T}\left[ u\right]
_{\mathring{S}_{1,\left( p_{0}-2\right) q_{0},q_{0}}\left( \Omega
\right) }^{s+1}\left\Vert g\right\Vert _{L^{\tilde{p_{0}}^{\ast
}}\left( \Omega \right) }dt-\left\Vert a_{3}\right\Vert
_{L^{1}\left( Q_{T}\right) } .  \tag{3.5}
\end{align*}%
By using Young's inequality to the third integral in \eqref{3.5}%
, we have
\begin{equation*}
\left\langle f\left( u\right) ,u\right\rangle _{Q_{T}}\geq C_{2}\left( \left[
u\right] _{L^{p_{0}}\left( 0,T;\mathring{S}_{1,\left( p_{0}-2\right)
q_{0},q_{0}}\left( \Omega \right) \right) }^{p_{0}}+\left\Vert u\right\Vert
_{L^{\alpha \left( x,t\right) }\left( Q_{T}\right) }^{\alpha ^{-}}\right) -K.
\end{equation*}%
Here, $K=K\left( \left\Vert a_{3}\right\Vert _{L^{1}\left( Q_{T}\right)
},\left\Vert g\right\Vert _{L^{\frac{p_{0}}{p_{0}-\left( s+1\right) }}\left(
0,T;L^{\tilde{p_{0}}^{\ast }}\left( \Omega \right) \right) }\right) $, $%
C_{2}=C_{2}\left( p_{0},s,{A}_{0},\left\vert \Omega \right\vert
\right) $ are positive constants. So the proof is completed.
\end{proof}

\begin{lemma}
\label{lembounded}Under the conditions of Theorem \ref{var}, $f$
is bounded from $S_{0}$ into
$L^{q_{0}}\left(0,T;W^{-1,q_{0}}\left( \Omega \right)\right)
+L^{\alpha ^{\ast }\left( x,t\right) }\left( Q_{T} \right)$.
\end{lemma}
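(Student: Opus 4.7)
The strategy is to decompose $f(u)=f_{1}(u)+f_{2}(u)+f_{3}(u)$, where $f_{1}(u):=-\sum_{i=1}^{n}D_{i}(|u|^{p_{0}-2}D_{i}u)$, $f_{2}(u):=a(x,t,u)$, and $f_{3}(u):=g(x,t)\|u\|_{L^{p}(\Omega)}^{s}(t)$, and show that $f_{1}(u),f_{3}(u)\in L^{q_{0}}(0,T;W^{-1,q_{0}}(\Omega))$ while $f_{2}(u)\in L^{\alpha^{\ast}(x,t)}(Q_{T})$, with norms controlled by the pn-norm of $u$ in $S_{0}$. Throughout, the variable-exponent modular--norm equivalence stated at the end of Section 2 lets us pass between $\|\cdot\|_{L^{\alpha^{\ast}(x,t)}}$ and $\sigma_{\alpha^{\ast}}(\cdot)$.

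First I would handle $f_{1}$: the definition of $\mathring{S}_{1,(p_{0}-2)q_{0},q_{0}}(\Omega)$ gives $(p_{0}-2)q_{0}+q_{0}=p_{0}$, so
\begin{equation*}
\int_{0}^{T}\!\!\int_{\Omega}\bigl(|u|^{p_{0}-2}|D_{i}u|\bigr)^{q_{0}}\,dx\,dt=\int_{0}^{T}\!\!\int_{\Omega}|u|^{(p_{0}-2)q_{0}}|D_{i}u|^{q_{0}}\,dx\,dt\leq\bigl[u\bigr]_{L^{p_{0}}(0,T;\mathring{S}_{1,(p_{0}-2)q_{0},q_{0}})}^{p_{0}}.
\end{equation*}
Hence $|u|^{p_{0}-2}D_{i}u\in L^{q_{0}}(Q_{T})$, whence $f_{1}(u)\in L^{q_{0}}(0,T;W^{-1,q_{0}}(\Omega))$, and the norm depends polynomially on $[u]_{S_{0}}$.

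Next I would treat $f_{2}$ using (U1). From $|a(x,t,u)|\leq a_{0}|u|^{\alpha-1}+a_{1}$ and $a_{1}\in L^{\alpha^{\ast}(x,t)}(Q_{T})$ it suffices to bound $a_{0}|u|^{\alpha-1}$ in the modular $\sigma_{\alpha^{\ast}}$. Since $(\alpha-1)\alpha^{\ast}=\alpha$, this reduces to estimating $\int a_{0}^{\alpha^{\ast}}|u|^{\alpha}\,dx\,dt$, and here the decomposition $Q_{T}=Q_{1,T}\cup Q_{2,T}$ defining $\beta(x,t)$ is used. On $Q_{2,T}$, where $\alpha\geq p_{0}-\eta$ and $\beta=\infty$, the hypothesis $a_{0}\in L^{\infty}(Q_{2,T})$ together with $u\in L^{\alpha(x,t)}(Q_{T})$ yields the bound directly. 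On $Q_{1,T}$, the exponents $\beta/\alpha^{\ast}$ and $p_{0}/\alpha$ are Young-conjugate precisely because $\beta=p_{0}\alpha^{\ast}/(p_{0}-\alpha)$, so the pointwise Young inequality gives $a_{0}^{\alpha^{\ast}}|u|^{\alpha}\leq C\bigl(a_{0}^{\beta}+|u|^{p_{0}}\bigr)$; integration and the embedding $\mathring{S}_{1,(p_{0}-2)q_{0},q_{0}}(\Omega)\subset L^{p_{0}}(\Omega)$ (which follows from $\tilde{p}_{0}\geq p_{0}$ and Theorem \ref{embed}(ii)) supply the $L^{p_{0}}(Q_{T})$-control of $u$. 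This is the step I expect to be the most delicate, because the variable exponent forces the careful splitting by the definition of $\beta(x,t)$ and a uniform control of $\alpha^{\ast}/\beta+\alpha/p_{0}=1$ on $Q_{1,T}$.

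Finally, for $f_{3}$ I would note that $\|u(t)\|_{L^{p}(\Omega)}^{s}$ is a scalar, so
\begin{equation*}
\bigl\|g(t)\|u(t)\|_{L^{p}(\Omega)}^{s}\bigr\|_{W^{-1,q_{0}}(\Omega)}\leq C\,\|g(t)\|_{L^{\tilde{p}_{0}^{\ast}}(\Omega)}\|u(t)\|_{L^{p}(\Omega)}^{s},
\end{equation*}
where I use the Sobolev embedding $W_{0}^{1,p_{0}}(\Omega)\hookrightarrow L^{\tilde{p}_{0}}(\Omega)$ dualised to $L^{\tilde{p}_{0}^{\ast}}(\Omega)\hookrightarrow W^{-1,q_{0}}(\Omega)$. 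The hypothesis $p\leq p_{0}\leq\tilde{p}_{0}$ together with Theorem \ref{embed}(ii) gives $\|u(t)\|_{L^{p}}\leq C[u(t)]_{\mathring{S}_{1,(p_{0}-2)q_{0},q_{0}}}$, so $\|u\|_{L^{p}}^{s}\in L^{p_{0}/s}(0,T)$. Since $\tfrac{s}{p_{0}}+\tfrac{p_{0}-(s+1)}{p_{0}}=\tfrac{p_{0}-1}{p_{0}}=\tfrac{1}{q_{0}}$, Hölder in $t$ with exponents $p_{0}/s$ and $p_{0}/(p_{0}-(s+1))$ against $\|g\|_{L^{\tilde{p}_{0}^{\ast}}(\Omega)}\in L^{p_{0}/(p_{0}-(s+1))}(0,T)$ gives $f_{3}(u)\in L^{q_{0}}(0,T;W^{-1,q_{0}}(\Omega))$ with the required quantitative control. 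Combining the three estimates yields the boundedness of $f$ from $S_{0}$ into $L^{q_{0}}(0,T;W^{-1,q_{0}}(\Omega))+L^{\alpha^{\ast}(x,t)}(Q_{T})$.
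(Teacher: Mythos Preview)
Your proof is correct and follows essentially the same strategy as the paper: split $f$ into the principal/nonlocal part and the $a(x,t,u)$ part, bound the former in $L^{q_{0}}(0,T;W^{-1,q_{0}}(\Omega))$ via H\"{o}lder and the embedding $\mathring{S}_{1,(p_{0}-2)q_{0},q_{0}}(\Omega)\subset L^{p}(\Omega)$, and bound the latter in $L^{\alpha^{\ast}(x,t)}(Q_{T})$ via \eqref{esit1} and the $Q_{1,T}/Q_{2,T}$ splitting that defines $\beta(x,t)$. The only cosmetic differences are that the paper groups the divergence and nonlocal terms together as a single $f_{1}$ and establishes its bound by testing against $v\in L^{p_{0}}(0,T;W_{0}^{1,p_{0}}(\Omega))$ rather than computing the norm of each summand directly, and that you spell out the $\beta(x,t)$ splitting in more detail than the paper does.
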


\begin{proof}
Firstly we define the mappings%
\begin{align*}
& f_{1}\left( u\right) :=\sum_{i=1}^{n}-D_{i}\left( \left\vert u\right\vert
^{p_{0}-2}D_{i}u\right) +g\left( x,t\right) \left\Vert u\right\Vert
_{L^{p}\left( \Omega \right) }^{s}\left( t\right), \\
& f_{2}\left( u\right) :=a\left( x,t,u\right) .
\end{align*}%
We need to show that, these mappings are both bounded from $L^{p_{0}}\left( 0,T;%
\mathring{S}_{1,\left( p_{0}-2\right) q_{0},q_{0}}\left( \Omega \right)
\right) \cap L^{\alpha \left( x,t\right) }\left( Q_{T}\right) $ to $%
L^{q_{0}}\left( 0,T;W^{-1,q_{0}}\left( \Omega \right) \right) +L^{\alpha
^{\ast }\left( x,t\right) }\left( Q_{T}\right) .$

Let us show that $f_{1}$ is bounded: For $u\in L^{p_{0}}\left( 0,T;\mathring{%
S}_{1,\left( p_{0}-2\right) q_{0},q_{0}}\left( \Omega \right) \right) $ and $%
v\in L^{p_{0}}\left( 0,T;W_{0}^{1,p_{0}}\left( \Omega \right) \right) $%
\begin{equation*}
\left\vert \left\langle f_{1}\left( u\right) ,v\right\rangle
_{Q_{T}}\right\vert \leq \sum_{i=1}^{n}\left(
\int\limits_{0}^{T}\int\limits_{\Omega }\left\vert u\right\vert
^{p_{0}-2}\left\vert D_{i}u\right\vert \left\vert
D_{i}v\right\vert dxdt\right)
+\int\limits_{0}^{T}\int\limits_{\Omega }\left\vert g\left(
x,t\right) \right\vert \left\Vert u\right\Vert _{L^{p}\left(
\Omega \right) }^{s}\left\vert v\right\vert dxdt.
\end{equation*}%
Using the embedding $\mathring{S}_{1,\left( p_{0}-2\right)
q_{0},q_{0}}\left( \Omega \right) \subset L^{p}\left( \Omega \right) $ and H%
\"{o}lder's inequality above we get,
\begin{align*}
& \leq \left[ \sum_{i=1}^{n}\left( \int\limits_{0}^{T}\int\limits_{\Omega
}\left\vert u\right\vert ^{(p_{0}-2)q_{0}}\left\vert D_{i}u\right\vert
^{q_{0}}dxdt\right) \right] ^{\frac{1}{q_{0}}}\left[ \sum_{i=1}^{n}\left(
\int\limits_{0}^{T}\int\limits_{\Omega }\left\vert D_{i}v\right\vert
^{p_{0}}dxdt\right) \right] ^{\frac{1}{p_{0}}} \\
& +\tilde{C}\int\limits_{0}^{T}\left[ u\right] _{\mathring{S}_{1,\left(
p_{0}-2\right) q_{0},q_{0}}\left( \Omega \right) }^{s}\left\Vert
g\right\Vert _{L^{\frac{np_{0}}{n(p_{0}-1)+p_{0}}}\left( \Omega \right)
}\left\Vert v\right\Vert _{W_{0}^{1,p_{0}}\left( \Omega \right) }dt
\end{align*}
Estimating the second integral above by H\"{o}lder's inequality ($\frac{p_{0}%
}{s}>1$), we obtain
\begin{align*}
& \left\vert \left\langle f_{1}\left( u\right) ,v\right\rangle
_{Q_{T}}\right\vert\leq \Psi(\left[ u\right] _{L^{p_{0}}\left(
0,T;\mathring{S}_{1,\left( p_{0}-2\right) q_{0},q_{0}}\left(
\Omega \right) \right) })\left\Vert v\right\Vert _{L^{p_{0}}\left(
0,T;W_{0}^{1,p_{0}}\left( \Omega \right) \right) }
\end{align*}%
where
\begin{align*}&\Psi(\left[ u\right] _{L^{p_{0}}\left(
0,T;\mathring{S}_{1,\left( p_{0}-2\right) q_{0},q_{0}}\left(
\Omega \right) \right) })=\\
&\left[ u\right] _{L^{p_{0}}\left( 0,T;\mathring{S}%
_{1,\left( p_{0}-2\right) q_{0},q_{0}}\left( \Omega \right)
\right) }^{p_{0}-1}+\tilde{C}_{1}\left[ u\right] _{L^{p_{0}}\left(
0,T;\mathring{S}_{1,\left( p_{0}-2\right) q_{0},q_{0}}\left(
\Omega \right) \right) }^{s}\left\Vert g\right\Vert
_{L^{\frac{p_{0}}{p_{0}-\left(s+1\right)}}\left(
0,T;L^{\tilde{p_{0}}^{\ast }}\left( \Omega \right)
\right)}.
\end{align*}
Thus by the last inequality we obtain the boundedness of $f_{1}$.
\par Similarly by using \eqref{esit1} and Theorem \ref{embed},
for all $u\in
S_{0} $, we have the following estimate%
\begin{align*}
\sigma _{\alpha ^{\ast }}\left( f_{2}\left( u\right) \right)
&=\sigma
_{\alpha ^{\ast }}\left( a\left( x,t,u\right) \right) \\
& =\int\limits_{0}^{T}\int\limits_{\Omega }\left\vert a\left(
x,t,u\right) \right\vert ^{\alpha ^{\ast }(x,t)}dxdt\\
&\leq C_{3}\left( \sigma _{\alpha
}\left( u\right) +\left[ u\right] _{L^{p_{0}}\left( 0,T;\mathring{S}%
_{1,\left( p_{0}-2\right) q_{0},q_{0}}\left( \Omega \right) \right)
}^{p_{0}}\right) +C_{4},
\end{align*}%
here $C_{3}=C_{3}\left( \alpha ^{+},\alpha ^{-},\left\Vert a_{0}\right\Vert
_{L^{\beta \left( x,t\right) }\left( Q_{T}\right) }\right),$ $%
C_{4}=C_{4}\left( \sigma _{\beta }\left( a_{0}\right) ,\sigma
_{\alpha ^{\ast }}\left( a_{1}\right) ,\left\vert \Omega
\right\vert \right)>0$ are
constants. So we prove that $f_{2}:$ $L^{p_{0}}\left( 0,T;\mathring{S}%
_{1,\left( p_{0}-2\right) q_{0},q_{0}}\left( \Omega \right) \right) \cap
L^{\alpha \left( x,t\right) }\left( Q_{T}\right) $ $\rightarrow $ $L^{\alpha
^{\ast }(x,t)}\left( Q_{T}\right) $ is bounded.
\end{proof}

\begin{lemma}
\label{lemweak}Under the conditions of Theorem \ref{var}, $f$ is
weakly compact from $S_{0}$ into $L^{q_{0}}\left(
0,T;W^{-1,q_{0}}\left( \Omega \right) \right) +L^{\alpha ^{\ast
}\left( x,t\right) }\left( Q_{T}\right) $.
\end{lemma}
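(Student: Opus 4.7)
The plan is to take an arbitrary bounded sequence $\{u_n\}\subset S_0$ and show that some subsequence of $\{f(u_n)\}$ converges weakly in $L^{q_0}(0,T;W^{-1,q_0}(\Omega))+L^{\alpha^{\ast}(x,t)}(Q_T)$. Lemma~\ref{lembounded} already yields boundedness of $\{f(u_n)\}$ in this reflexive sum space, so a weakly convergent subsequence exists for free; the real content is to identify the weak limit with $f(u)$, where $u\in S_0$ is the weak $S_0$-limit of $u_n$ (extracted using reflexivity of each component space, guaranteed by $1<q_0<\infty$ and $1<\alpha^{-}\le\alpha^{+}<\infty$).

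The first step is to upgrade weak convergence of $u_n$ to strong and a.e. convergence. Since $\{u_n\}$ is bounded in $L^{p_0}(0,T;\mathring{S}_{1,(p_0-2)q_0,q_0}(\Omega))$ and $\{\partial u_n/\partial t\}$ is bounded in $L^{q_0}(0,T;W^{-1,q_0}(\Omega))$, I invoke the compact embedding $\mathring{S}_{1,(p_0-2)q_0,q_0}(\Omega)\subset L^r(\Omega)$ for any $r<\tilde p_0$ from Theorem~\ref{embed}(ii), combined with an Aubin--Lions--Simon type argument, to extract a subsequence (not relabelled) such that $u_n\to u$ strongly in $L^{p_0}(Q_T)$ and a.e.\ on $Q_T$.

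The second step is to pass to the limit in each summand of $f$. For the principal diffusion $-\sum_i D_i(|u_n|^{p_0-2}D_i u_n)$ the key device is Theorem~\ref{homeo}: setting $v_n:=|u_n|^{p_0-2}u_n$ one has $D_i v_n=(p_0-1)|u_n|^{p_0-2}D_i u_n$ and $\{v_n\}$ is bounded in $L^{p_0}(0,T;W^{1,q_0}_0(\Omega))$; a.e.\ convergence of $u_n$ gives $v_n\to v:=|u|^{p_0-2}u$ a.e., which together with the uniform bound implies $v_n\rightharpoonup v$ weakly in $L^{p_0}(0,T;W^{1,q_0}_0(\Omega))$, so $|u_n|^{p_0-2}D_i u_n\rightharpoonup |u|^{p_0-2}D_i u$ in $L^{p_0}(0,T;L^{q_0}(\Omega))$. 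For the Carath\'eodory term, continuity of $\tau\mapsto a(x,t,\tau)$ and a.e.\ convergence give $a(x,t,u_n)\to a(x,t,u)$ a.e., and the growth bound \eqref{esit1} together with the uniform $L^{\alpha(x,t)}(Q_T)$-bound on $u_n$ produces equi-integrability in $L^{\alpha^{\ast}(x,t)}(Q_T)$, so Vitali's convergence theorem closes this term. For the nonlocal factor, since $p\le p_0$ the strong $L^{p_0}(Q_T)$-convergence yields $\|u_n(\cdot,t)\|_{L^p(\Omega)}^s\to\|u(\cdot,t)\|_{L^p(\Omega)}^s$ for a.e.\ $t$ with a uniform $L^{p_0/s}(0,T)$ bound, and pairing with $g\in L^{p_0/(p_0-s-1)}(0,T;L^{\tilde p_0^{\ast}}(\Omega))$ through H\"older produces the desired weak limit.

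The main obstacle I expect is the equi-integrability argument that powers the Vitali step on the $a(x,t,u_n)$ term, because of the variable exponent $\alpha(x,t)$: one must split $Q_T=Q_{1,T}\cup Q_{2,T}$ and use the hypothesis $a_0\in L^{\beta(x,t)}(Q_T)$ (whose exponent $\beta$ was tailored precisely for this decomposition) to control $a_0|u_n|^{\alpha(x,t)-1}$ separately on each subdomain, then combine with the uniform $L^{\alpha(x,t)}$-bound on $u_n$ and $a_1\in L^{\alpha^{\ast}(x,t)}(Q_T)$. Once this uniform integrability is established, the remaining identifications of weak limits are routine.
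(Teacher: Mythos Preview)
Your overall architecture coincides with the paper's: decompose $f$ into the diffusion part, the Carath\'eodory part $a(x,t,u)$, and the nonlocal part; use the compact embedding
\[
L^{p_0}\bigl(0,T;\mathring{S}_{1,(p_0-2)q_0,q_0}(\Omega)\bigr)\cap W^{1,q_0}\bigl(0,T;W^{-1,q_0}(\Omega)\bigr)\hookrightarrow L^{p_0}(Q_T)
\]
(what you call Aubin--Lions--Simon, what the paper cites as \eqref{3.6} from \cite{S1}) to obtain strong $L^{p_0}(Q_T)$ and a.e.\ convergence of a subsequence; then pass to the limit term by term. For the principal part you both invoke Theorem~\ref{homeo} to transfer to $v_n=|u_n|^{p_0-2}u_n$, and for the nonlocal part you both use $p\le p_0$ and the strong $L^{p_0}$ convergence. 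One small slip: the correct time exponent for $v_n$ is $q_0$, not $p_0$; one has $v_n\in L^{q_0}(0,T;W_0^{1,q_0}(\Omega))$, since $\|v_n(t)\|_{W_0^{1,q_0}}^{q_0}\approx [u_n(t)]_{\mathring{S}}^{p_0}$.

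The one substantive deviation is your treatment of $f_2(u)=a(x,t,u)$. You aim for \emph{strong} convergence in $L^{\alpha^\ast(x,t)}(Q_T)$ via Vitali, and you correctly anticipate that the equi-integrability of $|a(x,t,u_n)|^{\alpha^\ast}$ is the delicate point (indeed, on $Q_{2,T}$ a mere uniform $L^{\alpha(x,t)}$ bound on $u_n$ does not immediately yield it). The paper sidesteps this entirely: by Lemma~\ref{lembounded} the sequence $\{a(x,t,u_n)\}$ is bounded in the reflexive space $L^{\alpha^\ast(x,t)}(Q_T)$, hence has a weakly convergent subsequence with limit $\psi$; a.e.\ convergence $a(x,t,u_n)\to a(x,t,u_0)$ (from the Carath\'eodory property and $u_n\to u_0$ a.e.) then forces $\psi=a(x,t,u_0)$ by the standard lemma that a.e.\ convergence plus an $L^p$ bound, $p>1$, identifies the weak limit. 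Thus the ``main obstacle'' you single out simply does not arise in the paper's argument, and your route, while workable in principle, is more laborious than necessary.
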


\begin{proof}
First we verify the weak compactness of $f_{0}$ where $f_{0}\left( u\right)
:=-\sum_{i=1}^{n}D_{i}\left( \left\vert u\right\vert ^{p_{0}-2}D_{i}u\right)
$. Let $\left\{ u_{m}\left( x,t\right) \right\} _{m=1}^{\infty }\subset
S_{0} $ be bounded and $u_{m}\overset{\text{ }S_{0}}{\rightharpoonup }u_{0}$
it is sufficient to show a subsequence of $\left\{ u_{m_{j}}\right\}
_{m=1}^{\infty }\subset \left\{ u_{m}\right\} _{m=1}^{\infty }$ which
satisfies $f_{0}\left( u_{m_{j}}\right) $ $\overset{\text{ }L^{q_{0}}\left(
0,T;W^{-1,q_{0}}\left( \Omega \right) \right) }{\rightharpoonup }f_{0}\left(
u_{0}\right) .$

Since for fixed a.e. $t\in \left( 0,T\right) ,$ $u_{m}\left(
x,t\right)\in \mathring{S}_{1,\left( p_{0}-2\right)
q_{0},q_{0}}\left( \Omega \right) $ and we have one-to-one
correspondence
between the classes (Theorem \ref{homeo})%
\begin{equation*}
\mathring{S}_{1,\left( p_{0}-2\right) q_{0},q_{0}}\left( \Omega \right)
\underset{\varphi ^{-1}}{\overset{\varphi }{\longleftrightarrow }}%
W_{0}^{1,q_{0}}\left( \Omega \right)
\end{equation*}%
with the homeomorphism%
\begin{equation*}
\varphi \left( \tau \right) \equiv \left\vert \tau \right\vert
^{p_{0}-2}\tau ,\text{ }\varphi ^{-1}\left( \tau \right) \equiv \left\vert
\tau \right\vert ^{-\frac{p_{0}-2}{p_{0}-1}}\tau
\end{equation*}%
for $\forall m\geq 1$%
\begin{equation*}
\left\vert u_{m}\right\vert ^{p_{0}-2}u_{m}\in L^{q_{0}}\left(
0,T;W_{0}^{1,q_{0}}\left( \Omega \right) \right)
\end{equation*}%
is bounded. Since $L^{q_{0}}\left( 0,T;W_{0}^{1,q_{0}}\left(
\Omega \right) \right) $ is a reflexive space, there exists a
subsequence $\left\{ u_{m_{j}}\right\} _{m=1}^{\infty }\subset
\left\{ u_{m}\right\} _{m=1}^{\infty }$ such that
\begin{equation*}
\left\vert u_{m_{j}}\right\vert ^{p_{0}-2}u_{m_{j}}\overset{\text{ }%
L^{q_{0}}\left( 0,T;W_{0}^{1,q_{0}}\left( \Omega \right) \right) }{%
\rightharpoonup }\xi.
\end{equation*}%
Now we show that $\xi =\left\vert u_{0}\right\vert
^{p_{0}-2}u_{0}.$ According to compact embedding \cite{S1},
\begin{equation*}  \label{3.6}
L^{p_{0}}\left( 0,T;\mathring{S}_{1,\left( p_{0}-2\right)
q_{0},q_{0}}\left( \Omega \right) \right) \cap W^{1,q_{0}}\left(
0,T;W^{-1,q_{0}}\left( \Omega \right) \right) \hookrightarrow
L^{p_{0}}\left( Q_{T}\right)  \tag{3.6}
\end{equation*}
\begin{equation*}
\exists \left\{ u_{m_{j_{k}}}\right\} _{m=1}^{\infty }\subset \left\{
u_{m_{j}}\right\} _{m=1}^{\infty }\text{, }u_{m_{j_{k}}}\overset{\text{ }%
L^{p_{0}}\left( Q_{T}\right) }{\rightarrow }u_{0}\text{ }
\end{equation*}%
which implies%
\begin{equation*}
u_{m_{j_{k}}}\underset{a.e}{\overset{Q_{T}}{\rightarrow }}u_{0}
\end{equation*}%
by the continuity of $\varphi \left( \tau \right),$ we get
\begin{equation*}
\left\vert u_{m_{j_{k}}}\right\vert ^{p_{0}-2}u_{m_{j_{k}}}\underset{a.e}{%
\overset{Q_{T}}{\rightarrow }}\left\vert u_{0}\right\vert
^{p_{0}-2}u_{0}.
\end{equation*}%
So we obtain, $\xi =\left\vert u_{0}\right\vert ^{p_{0}-2}u_{0}.$

From this, we conclude that for $\forall v\in L^{p_{0}}\left(
0,T;W_{0}^{1,p_{0}}\left( \Omega \right) \right) $
\begin{align*}
\langle f_{0}\left( u_{m_{j_{k}}}\right) ,v\rangle_{Q_{T}}&
=\sum_{i=1}^{n}\langle -D_{i}\left(\left\vert u_{m_{j_{k}}}\right\vert
^{p_{0}-2}D_{i}u_{m_{j_{k}}}\right),v\rangle_{Q_{T}} \\
& \underset{m_{j}\nearrow \infty }{\longrightarrow }\sum_{i=1}^{n}\langle
-D_{i}\left( \left\vert u_{0}\right\vert ^{p_{0}-2}D_{i}u_{0}\right)
,v\rangle_{Q_{T}}=\langle f_{0}\left( u_{0}\right) ,v\rangle _{Q_{T}}
\end{align*}%
hence, the result is obtained.

Now we shall show the weak compactness of $f_{2}$. Since%
\begin{equation*}
a:L^{p_{0}}\left( 0,T;\mathring{S}_{1,\left( p_{0}-2\right)
q_{0},q_{0}}\left( \Omega \right) \right) \cap L^{\alpha \left( x,t\right)
}\left( Q_{T}\right) \rightarrow L^{\alpha ^{\ast }(x,t)}\left( Q_{T}\right)
\end{equation*}%
is bounded by Lemma \ref{lembounded}, then for $m\geq 1,$
$f_{2}\left( u_{m}\right) =\left\{ a\left( x,t,u_{m}\right)
\right\} _{m=1}^{\infty }\subset L^{\alpha ^{\ast }(x,t)}\left(
Q_{T}\right) .$ Also $L^{\alpha ^{\ast }(x,t)}\left( Q_{T} \right)
$ ($1<\left( \alpha ^{\ast }\right) ^{-}<\infty $) is a reflexive
space thus $\left\{ u_{m}\right\} _{m=1}^{\infty }$ has a
subsequence $\left\{ u_{m_{j}}\right\}
_{m=1}^{\infty }$ such that%
\begin{equation*}
a\left( x,t,u_{m_{j}}\right) \overset{\text{ }L^{\alpha ^{\ast
}(x,t)}\left( Q_{T}\right) }{\rightharpoonup }\psi.
\end{equation*}%
By the compact embedding \eqref{3.6}, we have,
\begin{equation*}
\exists \left\{ u_{m_{j_{k}}}\right\} _{m=1}^{\infty }\subset \left\{
u_{m_{j}}\right\} _{m=1}^{\infty }\text{, }u_{m_{j_{k}}}\overset{%
L^{p_{0}}\left( Q_{T}\right) }{\rightarrow }u_{0}\text{ }
\end{equation*}%
thus%
\begin{equation*}
u_{m_{j_{k}}}\underset{a.e}{\overset{Q_{T}}{\rightarrow }}u_{0}\text{ }
\end{equation*}%
and using the continuity of $a\left( x,t,.\right) $ for almost $\left(
x,t\right) \in Q_{T}$, we get%
\begin{equation*}
a(x,t,u_{m_{j_{k}}})\underset{a.e}{\overset{Q_{T}}{\rightarrow }}a\left(
x,t,u_{0}\right)
\end{equation*}%
so, we arrive at $\psi =a\left( x,t,u_{0}\right) $ i.e. $f_{2}(u_{m_{j_{k}}})%
\overset{L^{q_{0}}\left( 0,T;W^{-1,q_{0}}\left( \Omega \right) \right)
+L^{\alpha ^{\ast }\left( x,t\right) }\left( Q_{T}\right) }{\rightharpoonup }%
f_{2}\left( u_{0}\right) $.

Now let $a_{1}\left( x,t,u\right) :=g\left( x,t\right) \left\Vert
u\right\Vert _{L^{p}\left( \Omega \right) }\left( t\right) .$
Using the fact \eqref{3.6} and $p\leq p_{0}$, we have
\begin{equation*}
g\left( x,t\right) \left\Vert u_{m_{j}}\right\Vert _{L^{p}\left(
\Omega \right) }^{s}\left( t\right) \overset{L^{q_{0}}\left(
0,T;W^{-1,q_{0}}\left( \Omega \right) \right)
}{\longrightarrow}g\left( x,t\right) \left\Vert u_{0}\right\Vert
_{L^{p}\left( \Omega \right) }^{s}\left( t\right) .
\end{equation*}%
Therefore $a_{1}$ is weakly compact from $S_{0}$ into $L^{q_{0}}\left(
0,T;W^{-1,q_{0}}\left( \Omega \right) \right) +L^{\alpha ^{\ast }\left(
x,t\right) }\left( Q_{T}\right) .$ As a conclusion, $f$ is weakly compact
from $S_{0}$ into $L^{q_{0}}\left( 0,T;W^{-1,q_{0}}\left( \Omega \right)
\right) +L^{\alpha ^{\ast }\left( x,t\right) }\left( Q_{T}\right) .$
\end{proof}

\medskip

Now we give the proof of main theorem of this section.\medskip

\noindent \textbf{Proof of Theorem \ref{var}}. Since $A=Id,$ so
obviously it is a linear bounded map and satisfies the conditions
(ii) of Theorem \ref{gex}. Furthermore for any $u\in
W_{0}^{1,p_{0}}\left( Q_{T}\right) $ the following
inequalities are satisfied:%
\begin{equation*}
\int\limits_{0}^{T}\left\langle u,u\right\rangle _{\Omega
}dt=\int\limits_{0}^{T}\left\Vert u\right\Vert _{L^{2}\left(
\Omega \right) }^{2}dt\geq M\left\Vert u\right\Vert
_{L^{q_{0}}\left( 0,T;W^{-1,q_{0}}\left( \Omega \right) \right)
}^{2}
\end{equation*}%
and%
\begin{equation*}
\int\limits_{0}^{t}\left\langle \frac{\partial u}{\partial \tau }%
,u\right\rangle _{\Omega }d\tau =\frac{1}{2}\left\Vert u\right\Vert
_{L^{2}\left( \Omega \right) }^{2}\left( t\right) \geq M\frac{1}{2}%
\left\Vert u\right\Vert _{W^{-1,q_{0}}\left( \Omega \right) }^{2}\left(
t\right) ,
\end{equation*}%
a.e. $t\in \left[ 0,T\right] $ ($M>0$ constant comes from
embedding inequality). Thus condition (iv) of Theorem \ref{gex} is
also satisfied. Consequently from Lemma \ref{lemcoercive}-Lemma
\ref{lemweak}, it follows that the mappings $f$ and $A$ satisfy
all the conditions of Theorem \ref{gex}. If we
apply Theorem \ref{gex} to problem \eqref{MP}, we obtain that problem %
\eqref{MP} is solvable in $S_{0}$ for any $h\in L^{q_{0}}\left(
0,T;W^{-1,q_{0}}\left( \Omega \right) \right) +L^{\alpha ^{\ast }\left(
x,t\right) }\left( Q_{T}\right) $ satisfying the following inequality%
\begin{align*}
&\sup \left \{ \frac{1}{\left[ u\right] _{L^{p_{0}}\left( 0,T;\mathring{S}%
_{1,\left( p_{0}-2\right) q_{0},q_{0}}\left( \Omega \right)
\right) }+\left\Vert u\right\Vert _{L^{\alpha \left( x,t\right)
}\left( Q_{T}\right) }}\int\limits_{0}^{T}\left\langle
h,u\right\rangle _{\Omega }dt: u\in Q_{0} \right\} <\infty
\end{align*}%
where $Q_{0}:=L^{p_{0}}\left( 0,T;W_{0}^{1,p_{0}}\left( \Omega
\right) \right) \cap L^{\alpha \left( x,t\right) }\left(
Q_{T}\right)$. If we consider the norm definition of $h$ in
$L^{q_{0}}\left( 0,T;W^{-1,q_{0}}\left( \Omega \right) \right)
+L^{\alpha ^{\ast }\left(
x,t\right) }\left( Q_{T}\right) $, we conclude that \eqref{MP} is solvable in $%
S_{0}$ for any $h\in L^{q_{0}}\left( 0,T;W^{-1,q_{0}}\left( \Omega
\right) \right)+L^{\alpha ^{\ast }\left( x,t\right) }\left(
Q_{T}\right) .$ In order to complete the proof, it remains to
remark that \eqref{MP} can be written in the form
\begin{align*}
\frac{\partial {u}}{\partial
{t}}=h\left(x,t\right)-F\left(x,t,u,D_{i}u\right),
\end{align*}
and under the conditions of Theorem \ref{var} right hand belongs
to $L^{q_{0}}\left( 0,T;W^{-1,q_{0}}\left( \Omega \right) \right)$
which implies $\partial u/\partial t\in L^{q_{0}}\left(
0,T;W^{-1,q_{0}}\left( \Omega \right) \right).$

\begin{remark}
We note that if $\alpha ^{+}<p_{0}$ for the function $\alpha
\left( x,t\right)$ in \eqref{esit1} then existence of the weak
solution for problem \eqref{MP} can be proved under more
general(weak) sufficient conditions.
\end{remark}
This is proved in the following theorem:
\begin{theorem}
Assume that \eqref{esit1} is hold; the inequalities $1\leq
s<p_{0}-1$ and $p\leq p_{0}$ are satisfied. If $1<\alpha ^{-}\leq
\alpha \left( x,t\right) \leq \alpha ^{+}<p_{0},$ $\left(
x,t\right) \in Q_{T}$ and $g\in L^{\frac{p_{0}}{p_{0}-\left( s+1\right) }%
}\left( 0,T;L^{\tilde{p_{0}}^{\ast }}\left( \Omega \right) \right)
,$ $a_{0}\in L^{\beta _{1}\left( x,t\right) }\left( Q_{T}\right)
$, $a_{1}\in L^{\alpha ^{\ast }\left( x,t\right) }\left(
Q_{T}\right) $ where $\beta _{1}\left( x,t\right)
:=\frac{p_{0}\alpha ^{\ast }\left( x,t\right) }{p_{0}-\alpha
\left( x,t\right) }$ then $\forall h\in L^{q_{0}}\left(
0,T;W^{-1,q_{0}}\left( \Omega \right) \right) $ problem \eqref{MP}
has a generalized solution in the space $L^{p_{0}}\left(
0,T;\mathring{S}_{1,\left( p_{0}-2\right) q_{0},q_{0}}\left(
\Omega \right) \right) \cap W^{1,q_{0}}\left(
0,T;W^{-1,q_{0}}\left( \Omega \right) \right) $.
\end{theorem}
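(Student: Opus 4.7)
The plan is to apply the general solvability theorem (Theorem \ref{gex}) exactly as in the proof of Theorem \ref{var}, but with the simplified solution space
\[
\widetilde{S}_{0}:=L^{p_{0}}\bigl(0,T;\mathring{S}_{1,(p_{0}-2)q_{0},q_{0}}(\Omega)\bigr)\cap W^{1,q_{0}}\bigl(0,T;W^{-1,q_{0}}(\Omega)\bigr)\cap\{u:u(x,0)=0\},
\]
dropping the intersection with $L^{\alpha(x,t)}(Q_{T})$, and with the simpler target space $L^{q_{0}}(0,T;W^{-1,q_{0}}(\Omega))$ in place of the sum $L^{q_{0}}(0,T;W^{-1,q_{0}}(\Omega))+L^{\alpha^{\ast}(x,t)}(Q_{T})$. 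The operator $A=\mathrm{Id}$ is retained. The essential new observation is that the hypothesis $\alpha^{+}<p_{0}$ renders both the intersection with $L^{\alpha(x,t)}(Q_{T})$ on the domain side and the $L^{\alpha^{\ast}(x,t)}(Q_{T})$ summand on the target side redundant.

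First I would verify the two embedding facts that drive everything. Since $\alpha^{+}<p_{0}$, on the bounded cylinder $Q_{T}$ Lemma \ref{lm2.1} gives $L^{p_{0}}(Q_{T})\hookrightarrow L^{\alpha(x,t)}(Q_{T})$ and, dually, $L^{\alpha^{\ast}(x,t)}(Q_{T})\hookrightarrow L^{q_{0}}(Q_{T})$ (noting $\alpha^{\ast}(x,t)\geq p_{0}/(p_{0}-1)=q_{0}$). Composing with the standard $W_{0}^{1,p_{0}}(\Omega)\hookrightarrow L^{p_{0}}(\Omega)$ and its dual $L^{q_{0}}(\Omega)\hookrightarrow W^{-1,q_{0}}(\Omega)$, and using Theorem \ref{embed}, I obtain
\[
L^{p_{0}}\bigl(0,T;\mathring{S}_{1,(p_{0}-2)q_{0},q_{0}}(\Omega)\bigr)\subset L^{\alpha(x,t)}(Q_{T}),\qquad L^{\alpha^{\ast}(x,t)}(Q_{T})\hookrightarrow L^{q_{0}}\bigl(0,T;W^{-1,q_{0}}(\Omega)\bigr).
\]
Consequently $\widetilde{S}_{0}$ coincides with $S_{0}$ from Theorem \ref{var} in the present setting, and the sum space for $h$ collapses to $L^{q_{0}}(0,T;W^{-1,q_{0}}(\Omega))$, which is precisely the data space assumed in the statement.

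Next I would redo the three auxiliary lemmas (coercivity, boundedness, weak compactness) with only minor adjustments. For coercivity of the pair $(f,A)$, the estimate \eqref{3.5} obtained from \eqref{esit2} together with the embedding $\mathring{S}_{1,(p_{0}-2)q_{0},q_{0}}(\Omega)\subset L^{\tilde{p_{0}}}(\Omega)$ and Young's inequality gives the coercive lower bound in terms of $[u]_{L^{p_{0}}(0,T;\mathring{S}_{1,(p_{0}-2)q_{0},q_{0}}(\Omega))}^{p_{0}}$; the $L^{\alpha(x,t)}(Q_{T})$ piece no longer needs to be tracked separately. For boundedness, $f_{1}$ is treated verbatim as in Lemma \ref{lembounded}, while $f_{2}(u)=a(x,t,u)$ maps $\widetilde{S}_{0}$ into $L^{\alpha^{\ast}(x,t)}(Q_{T})$ by \eqref{esit1} and $a_{0}\in L^{\beta_{1}(x,t)}(Q_{T})$, and then into $L^{q_{0}}(0,T;W^{-1,q_{0}}(\Omega))$ by the embedding above. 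Weak compactness is likewise recycled from Lemma \ref{lemweak}: the $f_{0}$ part uses the homeomorphism of Theorem \ref{homeo} together with the compact embedding \eqref{3.6} unchanged; the $a(x,t,u_{m})$ part produces a weak limit in $L^{\alpha^{\ast}(x,t)}(Q_{T})$, identified via a.e.\ pointwise convergence along a subsequence, and then pushed into $L^{q_{0}}(0,T;W^{-1,q_{0}}(\Omega))$ by the embedding; the nonlocal term is handled as before using $p\leq p_{0}$.

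Condition (iv) of Theorem \ref{gex} is unchanged since $A=\mathrm{Id}$, so the same two $L^{2}$--$W^{-1,q_{0}}$ inequalities used in the proof of Theorem \ref{var} apply. Putting the pieces together, Theorem \ref{gex} yields a weak solution in $\widetilde{S}_{0}$ for every $h\in L^{q_{0}}(0,T;W^{-1,q_{0}}(\Omega))$, and the final regularity $\partial u/\partial t\in L^{q_{0}}(0,T;W^{-1,q_{0}}(\Omega))$ follows by reading off the equation. The only genuinely new step, and the one that takes all the work of the existing proof off the table, is the embedding observation in the second paragraph; I expect this to be the main, and essentially only, obstacle, everything else being a cosmetic simplification of the argument already carried out for Theorem \ref{var}.
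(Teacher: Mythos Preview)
There is a genuine gap in your coercivity step. The theorem you are proving assumes only \eqref{esit1}; condition \eqref{esit2} is \emph{not} among the hypotheses (indeed, the whole point of the remark preceding the theorem is that when $\alpha^{+}<p_{0}$ one can dispense with \eqref{esit2}, and accordingly no assumptions on $a_{2},a_{3}$ appear in the statement). Your plan invokes ``the estimate \eqref{3.5} obtained from \eqref{esit2}'', but \eqref{3.5} was derived in Lemma \ref{lemcoercive} precisely from \eqref{esit2}, so it is unavailable here.

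The paper establishes coercivity differently: using only \eqref{esit1}, the term $\int_{Q_{T}}a(x,t,u)u\,dxdt$ is bounded \emph{below} by $-\int_{Q_{T}}|a_{0}||u|^{\alpha(x,t)}\,dxdt-\int_{Q_{T}}|a_{1}||u|\,dxdt$, and these negative contributions are then absorbed into the principal term $[u]^{p_{0}}_{L^{p_{0}}(0,T;\mathring{S}_{1,(p_{0}-2)q_{0},q_{0}}(\Omega))}$ via Young's inequality together with the embedding $L^{p_{0}}(0,T;\mathring{S}_{1,(p_{0}-2)q_{0},q_{0}}(\Omega))\subset L^{p_{0}}(Q_{T})$. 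This is exactly where the hypotheses $\alpha^{+}<p_{0}$ and $a_{0}\in L^{\beta_{1}(x,t)}(Q_{T})$ are used. Your embedding observation that $L^{p_{0}}(0,T;\mathring{S}_{1,(p_{0}-2)q_{0},q_{0}}(\Omega))\subset L^{\alpha(x,t)}(Q_{T})$ is correct and is indeed what allows the simpler solution space; but it does not by itself furnish coercivity, and the remaining boundedness and weak compactness steps proceed as you indicate, by recycling Lemmas \ref{lembounded} and \ref{lemweak}.
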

\begin{proof}
Using \eqref{esit1} we have
\begin{align*}
\left\langle f\left( u\right) ,u\right\rangle _{Q_{T}}&\geq
\sum_{i=1}^{n}\left( \int\limits_{0}^{T}\int\limits_{\Omega
}\left\vert u\right\vert ^{p_{0}-2}\left\vert D_{i}u\right\vert
^{2}dxdt\right) -\int\limits_{Q_{T}}\left\vert a_{0}\left(
x,t\right) \right\vert \left\vert
u\right\vert ^{\alpha \left( x,t\right) }dxdt \\
& -\int\limits_{Q_{T}}\left\vert a_{1}\left( x,t\right)
\right\vert dxdt-\int\limits_{0}^{T}\int\limits_{\Omega
}\left\vert g\left( x,t\right) \right\vert \left\Vert u\right\Vert
_{L^{p}\left( \Omega \right) }^{s}\left\vert u\right\vert dxdt.
\end{align*}%
For arbitrary $\epsilon >0$ estimating the second integral above
by Young's inequality and using $L^{p_{0}}\left(
0,T;\mathring{S}_{1,\left( p_{0}-2\right) q_{0},q_{0}}\left(
\Omega \right) \right) \subset L^{p_{0}}\left( Q_{T}\right)
$(Theorem \ref{embed}) we obtain the following inequality which
gives the coercivity of $f$,
\begin{equation*}
\left\langle f\left( u\right) ,u\right\rangle _{Q_{T}}\geq C_{5}\left[ u%
\right] _{L^{p_{0}}\left( 0,T;\mathring{S}_{1,\left( p_{0}-2\right)
q_{0},q_{0}}\left( \Omega \right) \right) }^{p_{0}}-\tilde{K}.
\end{equation*}%
here $C_{5}=C_{5}\left( p_{0},\left\vert \Omega \right\vert ,s\right) $ and \\ $%
\tilde{K}=\tilde{K}\left( \epsilon ,\left\Vert a_{0}\right\Vert _{L^{\beta
_{1}\left( x,t\right) }\left( Q_{T}\right) },\left\Vert a_{1}\right\Vert
_{L^{\alpha ^{\ast }\left( x,t\right) }\left( Q_{T}\right) },\left\Vert
g\right\Vert _{L^{\frac{p_{0}}{p_{0}-\left( s+1\right) }}\left( 0,T;L^{%
\tilde{p_{0}}^{\ast }}\left( \Omega \right) \right) }\right).$
\par Using the embedding
\begin{equation*}
L^{p_{0}}\left( 0,T;\mathring{S}_{1,\left( p_{0}-2\right)
q_{0},q_{0}}\left( \Omega \right) \right) \subset L^{p_{0}}\left(
Q_{T}\right)\subset L^{\alpha \left( x,t\right)}\left(
Q_{T}\right),
\end{equation*}
weak compactness and boundedness of \\$f:L^{p_{0}}\left(
0,T;\mathring{S}_{1,\left( p_{0}-2\right) q_{0},q_{0}}\left(
\Omega \right) \right) \cap W^{1,q_{0}}\left(
0,T;W^{-1,q_{0}}\left( \Omega \right) \right)\rightarrow
L^{q_{0}}\left( 0,T;W^{-1,q_{0}}\left( \Omega \right) \right)$
follows from Lemma \ref{lembounded} and \ref{lemweak}. Thus by the
virtue of the proof of Theorem \ref{var}, the proof is completed.
\end{proof}

\section{Homogeneous Case}
In this section, we investigate problem \eqref{MP} in homogeneous
case. We will obtain sufficient conditions which ensure that
problem \eqref{MP} has only trivial solution under these
conditions.
\begin{theorem}\label{beh}Let conditions of Theorem \ref{var} be fulfilled with the following
assumptions:
\begin{itemize}
\item [(i)] Let $h(x,t)=0$ and $p=2$, $p_{0}>2.$
\item[(ii)] Condition \eqref{esit2} is satisfied with
$a_{3}(x,t)=0.$
\item[(iii)]The functional $\Vert g
\Vert_{L^{2}(\Omega)}(t)$ is bounded for almost every
$t\in\mathbb{R^{+}}$,
\end{itemize}
then problem \eqref{MP} has only trivial solution.
\end{theorem}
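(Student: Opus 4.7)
The plan is to test the equation against $u$ itself, derive an integral inequality for $y(t) := \|u(t)\|_{L^{2}(\Omega)}^{2}$, and then exploit the super-linear nature of the right-hand side together with the zero initial datum. The formal computation would be justified rigorously by applying it to the Galerkin approximants underlying the proof of Theorem \ref{gex} and passing to the limit. Taking $w = u$ in the weak formulation of Definition \ref{weakdef} and integrating over $(0,t)$, using $h \equiv 0$ and $u(\cdot,0)=0$, yields
\begin{equation*}
\tfrac{1}{2} y(t) + \sum_{i=1}^{n}\int_{0}^{t}\!\!\int_{\Omega}|u|^{p_{0}-2}(D_{i}u)^{2}\,dxd\tau + \int_{0}^{t}\!\!\int_{\Omega}a(x,\tau,u)u\,dxd\tau + \int_{0}^{t}\|u\|_{L^{2}(\Omega)}^{s}(\tau)\!\int_{\Omega}g(x,\tau)u(x,\tau)\,dxd\tau = 0.
\end{equation*}

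Assumption (ii) together with \eqref{esit2} (with $a_{3}\equiv 0$) gives $\int_{\Omega}a(x,\tau,u)u\,dx \geq A_{0}\int_{\Omega}|u|^{\alpha(x,\tau)}\,dx \geq 0$, while the diffusion term is manifestly nonnegative, so both middle terms can be dropped. For the remaining nonlocal term, the choice $p=2$ allows Cauchy-Schwarz in $x$: $\bigl|\int_{\Omega}g(x,\tau)u\,dx\bigr| \leq \|g\|_{L^{2}(\Omega)}(\tau)\,y(\tau)^{1/2}$. Invoking (iii), which furnishes an essential bound $\|g\|_{L^{2}(\Omega)}(\tau)\leq M$, one arrives at the integral inequality
\begin{equation*}
y(t) \leq 2M\int_{0}^{t} y(\tau)^{(s+1)/2}\,d\tau,\qquad y(0)=0.
\end{equation*}

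The final step is to deduce $y\equiv 0$ under $1\leq s < p_{0}-1$. For $s=1$ this is an immediate application of Gronwall's lemma. For $s>1$, set $Y(t):=\int_{0}^{t}y(\tau)^{(s+1)/2}\,d\tau$; then $y\leq 2MY$ implies $Y'(t)\leq (2M)^{(s+1)/2}Y(t)^{(s+1)/2}$ a.e. Supposing for contradiction that $Y(t_{0})>0$ for some $t_{0}$, let $t_{1}:=\sup\{t\leq t_{0}:Y(t)=0\}$; separation of variables and integration from $t_{1}+\epsilon$ to $t_{0}$ produces
\begin{equation*}
\frac{2}{s-1}\bigl(Y(t_{1}+\epsilon)^{-(s-1)/2} - Y(t_{0})^{-(s-1)/2}\bigr) \leq (2M)^{(s+1)/2}(t_{0}-t_{1}),
\end{equation*}
but the left-hand side tends to $+\infty$ as $\epsilon\to 0^{+}$ since $Y(t_{1})=0$ and $(s-1)/2>0$, a contradiction. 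Hence $Y\equiv 0$, so $y\equiv 0$, i.e.\ $u\equiv 0$.

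Apart from the technical justification of using $u$ as a test function (handled via Galerkin approximation), the main obstacle is the ODE-uniqueness step: the inequality $y'\leq Cy^{(s+1)/2}$ with exponent exceeding $1$ is outside the scope of classical Gronwall and requires the Osgood-type separation-of-variables argument above, whose viability depends essentially on the initial value being exactly zero.
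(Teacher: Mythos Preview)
Your argument is correct, but it follows a genuinely different path from the paper's. Both proofs start by testing with $u$ and using (ii) to discard the $a(x,t,u)u$ term; the divergence occurs in how the remaining diffusion and nonlocal terms are balanced. The paper \emph{retains} the diffusion term, rewrites it as $\tfrac{4}{p_{0}^{2}}\sum_{i}\int_{\Omega}\bigl(D_{i}(|u|^{p_{0}/2})\bigr)^{2}dx$, applies a Poincar\'e-type embedding to extract $\|u\|_{L^{p_{0}}(\Omega)}^{p_{0}}\gtrsim y^{p_{0}/2}$, and then uses Young's inequality (which is exactly where the hypothesis $s<p_{0}-1$, i.e.\ $(s+1)/2<p_{0}/2$, enters) to absorb the nonlocal contribution $Ky^{(s+1)/2}$ into $\varepsilon y^{p_{0}/2}+C(\varepsilon)y$; this reduces everything to the \emph{linear} Gronwall inequality $y'\leq 2KC(\varepsilon)y$, $y(0)=0$. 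You instead discard the diffusion term outright and face the superlinear inequality $y'\leq Cy^{(s+1)/2}$ directly, disposing of it by an Osgood-type uniqueness argument that exploits $(s+1)/2\geq 1$. Your route is shorter and, notably, never invokes $p_{0}>2$ or $s<p_{0}-1$; those hypotheses are used in the paper precisely to force the balance needed for the Young/Gronwall reduction. The paper's approach, on the other hand, stays within the classical Gronwall framework and makes transparent why the structural condition $s+1<p_{0}$ is natural from the energy point of view.
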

\begin{proof}
Conditions of Theorem \ref{beh} provide that problem \eqref{MP}
has a solution in $S_{0}$. It follows from Definition
\ref{weakdef} that every weak solution satisfies the following
relation,
\begin{equation*}
 \frac{1}{2}\frac{d}{dt}\Vert u\Vert_{L^{2}(\Omega) }^{2}+\sum_{i=1}^{n}\int\limits_{\Omega }\left(
\left\vert u\right\vert ^{p_{0}-2}(D_{i}u)^{2}\right)dx
+\int\limits_{\Omega }a\left( x,t,u\right) udx+\int\limits_{\Omega
}g\left( x,t\right) \left\Vert u\right\Vert _{L^{2}\left( \Omega
\right) }^{s}udx=0
\end{equation*}
then we get
\begin{equation*}
 \frac{1}{2}\frac{d}{dt}\Vert u\Vert_{L^{2}(\Omega) }^{2}+\frac{4}{p_{0}^{2}}\sum_{i=1}^{n}\int\limits_{\Omega }
(D_{i}(\left\vert u\right\vert^{\frac{p_{0}}{2}}))^{2}dx
+\int\limits_{\Omega }a\left( x,t,u\right) udx+\int\limits_{
\Omega }g\left( x,t\right) \left\Vert u\right\Vert _{L^{2}\left(
\Omega \right) }^{s}udx=0
\end{equation*}
by using imbedding inequality and condition (ii), we obtain that
\begin{equation*}
 \frac{1}{2}\frac{d}{dt}\Vert u\Vert_{L^{2}(\Omega) }^{2}+\frac{4}{p_{0}^{2}c}\int\limits_{\Omega }
\left\vert u\right\vert^{p_{0}}dx+\int\limits_{\Omega }g( x,t)
\Vert u\Vert _{L^{2}\left( \Omega \right) }^{s}udx\leq 0
\label{beh-1}
\end{equation*}
by using Hölder inequality and condition (iii) for the last term
we have
\begin{equation*}
 \frac{1}{2}\frac{d}{dt}\Vert u\Vert_{L^{2}(\Omega) }^{2}+\frac{4}{p_{0}^{2}c}\int\limits_{\Omega }
\left\vert u\right\vert^{p_{0}}dx-K \Vert u\Vert _{L^{2}\left(
\Omega \right) }^{s+1}\leq 0 \label{beh-1}
\end{equation*}
where $K>0$ is a constant and by using embedding inequality,
\begin{equation*}
 \frac{1}{2}\frac{d}{dt}\Vert u\Vert_{L^{2}(\Omega) }^{2}+\frac{4}{p_{0}^{2}c(\emph{\emph{meas}}(\Omega))^{\frac{p_{0}-2}{2}}}\|u\|_{L_{2}(\Omega)}^{p_{0}}-K \Vert u\Vert _{L^{2}\left( \Omega \right)
}^{s+1}\leq 0 \label{beh-1}
\end{equation*}
denoting by $y=\|u\|_{L_{2}(\Omega)}^{2}$ and
$\mu=\frac{p_{0}}{2}$, we have
\begin{equation*}
 \frac{1}{2}\frac{dy}{dt}+\frac{4}{p_{0}^{2}c(\emph{\emph{meas}}(\Omega))^{\frac{p_{0}-2}{2}}}y^{\mu}-K y^{\frac{s+1}{2}}\leq 0 \label{beh-1}
\end{equation*}
by using Young inequality for the last term we have
\begin{equation*}
 \frac{1}{2}\frac{dy}{dt}+(\frac{4}{p_{0}^{2}c(\emph{\emph{meas}}(\Omega))^{\frac{p_{0}-2}{2}}}-K\varepsilon)y^{\mu}-Kc(\varepsilon) y\leq 0 \label{beh-1}
\end{equation*}
where $\varepsilon<
\frac{4}{Kp_{0}^{2}c\emph{\emph{meas}}(\Omega)^{\frac{p_{0}-2}{2}}}$
then we have,
\begin{equation*}
 \frac{1}{2}\frac{dy}{dt}\leq Kc(\varepsilon) y \label{beh-1}
\end{equation*}
by solving the last inequality and considering $y(0)=0$ we arrive
at the desired result.
\end{proof}

\end{document}